\newcommand{\nwc}{\newcommand}
\nwc{\aaa}{\mathcal{F}}
\nwc{\aap}{\mathcal{F}_{P}}
\nwc{\al}{\alpha}
\nwc{\C}{\mathbb{C}}
\nwc{\cb}{\overline{C}}
\nwc{\ccc}{\mathcal{C}}
\nwc{\ch}{\widehat{C}}
\nwc{\cin}{\textbf{(v)}}
\nwc{\cl}{C'}
\nwc{\cp}{\mathcal{C}_{P}}
\nwc{\cpll}{\mathfrak{c}_{P'}}
\nwc{\ct}{\widetilde{C}}
\nwc{\dd}{\mathcal{L}}
\nwc{\ddd}{\mathfrak{d}}
\nwc{\ddl}{\mathcal{L}'}
\nwc{\dlp}{\delta_{P}}
\nwc{\doi}{\textbf{(ii)}}
\nwc{\enq}{$$}
\nwc{\fl}{\flushleft}
\nwc{\fff}{\mathcal{F}}
\nwc{\ffp}{\mathcal{F}_{P}}
\nwc{\ffq}{\mathcal{F}_{Q}}
\nwc{\ffl}{\mathcal{F}'}
\nwc{\G}{\mathcal{G}}
\nwc{\Ga}{\Gamma}
\nwc{\gon}{{\rm gon}}
\nwc{\gtl}{\widetilde{g}}
\nwc{\hra}{\hookrightarrow}
\nwc{\hua}{h^{1}(C,\aaa )}
\nwc{\kk}{{\rm K}}
\nwc{\kp}{{\kappa}}
\nwc{\llb}{\mathcal{L}}
\nwc{\mb}{\mathbb}
\nwc{\mc}{\mathcal}
\nwc{\mm}{\mathfrak{m}}
\nwc{\mmp}{\mathfrak{m}_{P}}
\nwc{\mpd}{\mathfrak{m}_{P}^{2}}
\nwc{\nn}{\mathbb{N}}
\nwc{\ob}{\overline{\mathcal{O}}}
\nwc{\obr}{\mathcal{O}^*}
\nwc{\obp}{\overline{\mathcal{O}}_P}
\nwc{\och}{\mathcal{O}_{\hat{C}}}
\nwc{\oh}{\widehat{\mathcal{O}}}
\nwc{\ohp}{\widehat{\mathcal{O}}_{P}}
\nwc{\ol}{\mathcal{O}'}
\nwc{\oma}{\Omega (\mathfrak{a})}
\nwc{\omo}{\Omega (\mathcal{O})}
\nwc{\oo}{\mathcal{O}}
\nwc{\op}{\mathcal{O}_P}
\nwc{\opc}{\mathcal{O}_{P,C}}
\nwc{\oph}{\hat{\mathcal{O}}_{P}}
\nwc{\opl}{\mathcal{O}_{P}'}
\nwc{\oplc}{\mathcal{O}_{P,C}'}
\nwc{\opll}{\mathcal{O}_{P'}}
\nwc{\opt}{\tilde{\mathcal{O}}_{P}}
\nwc{\optt}{{\mathcal{O}}_{\tilde{P}}}
\nwc{\oq}{\mathcal{O}_{Q}}
\nwc{\oqt}{\tilde{\mathcal{O}}_{Q}}
\nwc{\ot}{\widetilde{\mathcal{O}}}
\nwc{\overop}{\bar{\oo}_{P}}
\nwc{\pb}{\overline{P}}
\nwc{\pbb}{P^*}
\nwc{\pbi}{\overline{P_{i}}}
\nwc{\pbr}{\overline{P_{r}}}
\nwc{\pgmd}{\mathbb{P}^{g+2}}
\nwc{\pgmu}{\mathbb{P}^{g+1}}
\nwc{\ph}{\hat{P}}
\nwc{\pp}{\mathbb{P}}
\nwc{\ppn}{\mathbb{P}^{n}}
\nwc{\prv}{\noindent\textbook{Proof}:}
\nwc{\pt}{\widetilde{P}}
\nwc{\ptl}{\tilde{P}}
\nwc{\pum}{\mathbb{P}^{1}}
\nwc{\qh}{\hat{Q}}
\nwc{\qtl}{\tilde{Q}}
\nwc{\qua}{\textbf{(iv)}}
\nwc{\ra}{\rightarrow}
\nwc{\rh}{\hat{R}}
\nwc{\sei}{\textbf{(vi)}}
\nwc{\sep}{\beq\ast\ \ast\ \ast\enq}
\nwc{\sig}{\sigma}
\nwc{\Sig}{\Sigma}
\nwc{\ssp}{{\rm S}_{P}}
\nwc{\sss}{{\rm S}}
\nwc{\sssh}{\widehat{{\rm S}}}
\nwc{\sys}{\mathcal{L}}
\nwc{\tre}{\textbf{(iii)}}
\nwc{\um}{\textbf{(i)}}
\nwc{\val}{\mathcal{V}}
\nwc{\vpb}{v_{\overline{P}}}
\nwc{\vtxp}{\widetilde{V}_{x,P}}
\nwc{\vv}{\mathcal{W}}
\nwc{\vvp}{\mathcal{W}_{P}}
\nwc{\vxp}{V_{x,P}}
\nwc{\wh}{\hat{\omega}}
\nwc{\whp}{\hat{\omega}_{P}}
\nwc{\woch}{\omega\cdot\mathcal{O}_{\hat{C}}}
\nwc{\woh}{\omega\cdot\hat{\mathcal{O}}}
\nwc{\ww}{\omega}
\nwc{\wwb}{\omega^*}
\nwc{\wwct}{\omega _{\widetilde{C}}}
\nwc{\wwh}{\widehat{\omega}}
\nwc{\wwhp}{\widehat{\omega}_P}
\nwc{\wwp}{\omega _{P}}
\nwc{\wwt}{\widetilde{\omega}}
\nwc{\wwtp}{\widetilde{\omega}_P}
\nwc{\zz}{\mathbb{Z}}
\newtheorem{coro}{Corollary}[section]
\newtheorem{lemma}[coro]{Lemma}
\newtheorem{rem}[coro]{Remark}
\newtheorem{thm}[coro]{Theorem}
\newtheorem{ex}[coro]{Example}
\let \fl=\flushleft
\let \al=\alpha
\begin{document}

\title{On Gonality and Canonical Models of Unicuspidal Rational Curves}

\author{Naam\~a Galdino}
\address{Departamento de Matem\'atica, ICEx, UFMG
Av. Ant\^onio Carlos 6627,
30123-970 Belo Horizonte MG, Brazil}
\email{naamagaldino@ufmg.br}

\author{Renato Vidal Martins}
\address{Departamento de Matem\'atica, ICEx, UFMG
Av. Ant\^onio Carlos 6627,
30123-970 Belo Horizonte MG, Brazil}
\email{renato@mat.ufmg.br}

\author{Danielle Nicolau}
\address{Departamento de Matem\'atica, UFV / CAF,
Rodovia LMG 818 km 06,
35690-000 Florestal MG, Brazil}
\email{dani.nicolau@ufv.br}




\maketitle

\begin{abstract}
We study the gonality of a curve $C$ and its canonical model $C'$, by means of unicuspidal rational curves,  where those concepts can be better understood. We  start by a general formula for the dimension of the space of hypersurfaces of a fixed degree containing $C'$, which we apply to some particular cases. Then we  classify unicuspidal rational curves via different notions of gonality, and by its canonical model, up to genus 6. We do it using general methods applied to certain families of curves of arbitrary genus. 
\end{abstract}


\section*{Introduction}

Gonality is a fundamental measure of the (ir)rationality of a projective curve. The usual definition, in the smooth case, is the smallest $d$ for which the curve $C$: (a) admits a $d$-cover of $\mathbb{P }^1$, or, equivalently, (b) carries a pencil of degree $d$. Also, this notion has a clear geometric appeal: $C$ is $d$-gonal if and only if $C$ lies on a $(d-1)$-fold scroll whose rulings cut out the $g_d^1$ \cite{EH,Sc}. This result was extended in \cite{RS} for $d=3$ and $C$ singular Gorenstein, as in this case $C$ can still be viewed as a \emph{canonical curve}.


\medskip
The simplest non-Gorenstein curves are examples of how (a) and (b) may differ. They are defined as having a unique singularity for which the maximal ideal and the conductor coincide, and were studied, e.g., in \cite{BC, EHKS,St} and called \emph{nearly normal} in \cite{KM}. They carry a $g_2^1$ with a non-removable base point \cite[Thm.~2.1]{Mt} but the smallest degree of a cover to $\mathbb{P}^1$ is $g+1$, where $g$ is the arithmetic genus (a consequence of our Theorem \ref{thmggr}.(iii)). 

\medskip
Also, if $C$ is non-Gorenstein, then the expected gonality-scroll correspondence should be stated as follows: $C$ is $d$-gonal iff $C'$ lies on a $(d-1)$-fold scroll (cf. \cite{LMS}). Here, $C'$ is the \emph{canonical model} of $C$ introduced by Rosenlicht in \cite{R} and defined as follows: let $\cb$ be the normalization and $\ww$ the dualizing sheaf of $C$. Then the global sections of $\ww$ induce a morphism $\cb\to\pp^{g-1}$ whose image is $C'$. In other words, the canonical model $C'$ (and not $C$) is the curve where a $g_d^1$ on $C$ is geometrically realized as intersections with rulings of a scroll. 

\medskip
In \cite{FM}, a study of non-Gorenstein curves of $g\leq 5$ via gonality is carried out based on a systematic analysis of all possible semigroups. This is the point of departure of the present work. Motivated by \cite{CFM1, CLM}, if we restrict our study to unicuspidal rational curves,  it becomes feasible to identifying not only semigroups, but to classify curves via gonality, give parametric descriptions of canonical models, and, in particular, have a better and explicit understanding of these concepts.  .

\medskip
So we start deriving in Theorem \ref{thmdim} a general formula for the dimension of hypersurfaces of a fixed degree containing $C'$, when $C$ satisfies a certain parameter property, which we give a way of computing in Remark \ref{remrem}. The key ingredient is a singular version of the Noether Theorem recently proved in \cite{GM}. Then we verify the result for genus-$4$ unicuspidal rational curves. Since $C'$ is a space-curve if $g=4$, we are also able to depict it as an intersection of quadrics and cubic hypersurfaces. The quadrics correspond to $2$-fold scrolls containing $C'$, while the cubics determine the induced $g_3^1$ cut out by the rulings. In order to make computations, we rely on a parametric description of $C'$ done later on in the text.

\medskip
In Theorem \ref{thmggr}, we prove useful results for computing gonality and describing canonical models of rational curves with a unique cusp $P$ by means of semigroup techniques. We get: (i) a form for a sheaf that computes gonality; (ii) a formula for its degree; (iii) a lower bound for the gonality computed by base point free pencils, which agrees with the multiplicity of $P$; in particular, $g+1$ is the smallest degree of a cover $C\to\mathbb{P}^1$ if $C$ is nearly normal; (iv) a description of $C'$ in terms of the semigroup of $P$.

\medskip
Then we apply the methods above to a family of curves with a prescribed semigroup of just one block of values in Theorem \ref{general}, where we get a parametric description of $C'$ and the gonalities they may admit. We also state a similar result in Theorem \ref{general1}, with details in a supplementary link. The methods may work for any semigroup. We chose the ones here mainly because they provide a full classification of all unicuspidal rational non-Gorenstein curves of genus at most $6$ in terms of different notions of gonality, with description of canonical models, in Theorem \ref{thmgn6}.

\medskip
\noindent{\bf Acknowledgements.} We thank the Referee for many suggestions and corrections we buit into this version. This work is part of the first named author Ph.D thesis. The second named author is partially supported by FAPEMIG  RED-00133-21.We thank C. Carvalho, E. Cotterill, A. Fenandez and M. E. Hernandes for many suggestions as well.

\section{Linear Systems, Gonality and Canonical Models}
\label{cap1}

For this section (but not for the remainder), a \emph{curve} is an integral and complete one-dimensional scheme over an algebraically closed ground field. Let $C$ be a curve of (arithmetic) genus $g$ with structure sheaf $\oo_C$, or simply $\oo$, and $k(C)$ the field of rational functions. Let $\pi :\overline{C}\rightarrow C$ be the normalization map, set $\overline{\oo}:=\pi _{*}(\oo _{\overline{C}})$ and call $\ccc:=\mathcal{H}\text{om}(\overline{\oo},\oo)$, the conductor of $\overline{\oo}$ into $\oo$. Let also $\ww_C$, or simply $\ww$, denote the dualizing sheaf of $C$. A point $P\in C$ is said to be \emph{Gorenstein} if $\ww_P$ is a free $\oo_P$-module. The curve is said to be \emph{Gorenstein} if all of its points are so, or equivalently, $\ww$ is invertible. It is said to be \emph{hyperelliptic} if there is a morphism $C\rightarrow\pp^1$ of degree $2$.

\subsection{Linear Systems and Gonality}
\label{secs21}
 
A \emph{linear system of dimension $r$ and degree $d$ on $C$} is a pair of the form $\dd(\aaa ,V)$ 
where $\mathcal{F}$ is a torsion free sheaf of rank $1$ and degree $d$ on $C$ and $V$ is a vector subspace of $H^{0}(\aaa )$ of dimension $r+1$. For short, $\dd$ is also referred as a $g_{d}^{r}$. Varying $x\in V$, a linear system may be seen as a set of exact sequences $\{\mathcal{H}{\text om}(\aaa,\ww)\stackrel{x^*}\rightarrow\ww\to\mathcal{Q}_x\}$ as in \cite{KK} or a set of coherent fractional ideal sheaves $\{x^{-1}\aaa\}$ as in \cite{S}, with $\aaa$ embedded in the constant sheaf $\mathcal{K}$ of rational functions (see \cite[Sec. 1]{LMS} for more details).

\medskip
The linear system is said to be \emph{complete} if $V=H^0(\aaa)$, written $|\aaa|$ when so. The \emph{gonality} of $C$ is the
smallest $d$ for which there exists a $g_{d}^{1}$ on $C$, or equivalently, for which there exists an $\aaa$ on $C$ as above with $\deg(\aaa)=d$ and $h^0(\aaa)\geq 2$. A point $P\in C$ is called a \emph{base point of $\dd$} if $x_P:\op\to\aaa_P$ is not an isomorphism for every $x\in V$. A base point is said \emph{removable} if it is not a base point of $\dd(\oo\langle V\rangle,V)$. 
So $P$ is a non-removable base point of $\dd$ if and only if $\aaa_P$ is not a free $\op$-module. In particular, $P$ is singular if so.

\subsection{The Canonical Model}
\label{seccan}


Consider the morphism $\kappa :\overline{C}\rightarrow\pp^{g-1}$ induced by the linear system $\sys(\pi^*\ww /\text{Tors}(\pi^*\ww),H^0(\ww))$. The image $C':=\kappa(C)$ is the \emph{canonical model} of $C$. Rosenlicht proved in \cite[Thm\,17]{R} that if $C$ is nonhyperelliptic, then there is a morphism
$C'\rightarrow C$ factoring the normalization map. 

\medskip
Now set $\ww^n:=\text{Sym}^n\ww/\text{Tors}(\text{Sym}^n\ww)$ and consider the \emph{canonical blowup} $\widehat{C}:=\text{Proj}(\oplus\,\ww ^n)$ of $C$ along $\ww$. In \cite[Thm.~6.4]{KM}, Rosenlicht's result is reproved and rephrased as an isomorphism $\widehat{C}\stackrel{\thicksim}{\to}C'$.

\medskip
The canonical model clearly plays the role of a \emph{canonical curve}. If $C$ is non-hyperelliptic Gorenstein, then $C'$ is nothing but the embedding of $C$ on $\pp^{g-1}$ via the canonical map. Otherwise $C'$, though failing being isomorphic to $C$, carries much of relevant data of the original curve, as for instance the realization of linear series via scrolls (cf. \cite{LMS}).

\subsection{Numerical Semigroups} 

Let $P\in C$ be a unibranch point, i.e., there is a unique $\pb\in\cb$ lying over $P$. Let also
$$
v_{\pb}: k(C)^* \longrightarrow \zz
$$
be the valuation map. The \emph{numerical semigroup} of $P$ is
$$
\sss=\ssp:=\vpb(\op ).
$$
Now set
\begin{equation*}
\label{equaab}
\alpha=\alpha_P :={\rm min}(\sss\setminus\{ 0\})\ \ \ \ \ \text{and}\ \ \  \ \ \ \beta = \beta_P ={\rm min}\{a\in\sss\,|\, a+\mathbb{N}\subset \sss\}
\end{equation*}
These parameters are related to important dimensions, namely,
$$
\alpha=\mbox{dim}(\obp/\mmp\obp)\ \ \ \ \ \ \text{and}\ \ \ \ \ \ \beta=\mbox{dim}(\obp/\mathcal{C}_P)
$$
where
$$
\cp:=\{ x\in k(C)\,|\, x\obp \subset \op\}
$$
is the \emph{conductor} ideal of $P$. One defines the set of \emph{gaps} of $\sss$ as
$$
{\rm G} :=\nn\setminus\sss = \{\ell_1=1,\ell_2,\ldots,\ell_{\delta}=\beta-1\} 
$$
where
\begin{equation}
\label{equkkd}
\delta :=\#({\rm G}) 
\end{equation}
agrees with the singularity degree of $P$, that is, 
\begin{equation}
\label{equdlt}
\delta=\dim(\obp/\op).
\end{equation} 

One also defines
\begin{equation*}
\label{equkkp}
\kk=\kk_{P}:=\{ a\in\zz\ |\ \gamma -a\not\in\sss\}
\end{equation*}
To see its importance, recall one also defines a valuation map on differentials
$$
\vpb : \Omega_{k(C)|k} \longrightarrow \mathbb{Z}
$$
via (any choice of) a local parameter $t$ of $\pb$ by $\vpb(xdt):=\vpb(x)$. Then we have:

\begin{lemma}
\label{lemlbd}
Let $P\in C$ be unibranch. Then there exists $\lambda\in H^0(\ww)$ such that:
\begin{itemize}
\item[(i)] $\ww_P/\lambda = H^0(\ww)/\lambda + \cp$;
\item[(ii)] $\vpb(\lambda) = -\beta$.
\item[(iii)] $\vpb(\wwp/\lambda)=\kk$
\end{itemize}
\end{lemma}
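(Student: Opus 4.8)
The plan is to combine the residue description of the dualizing sheaf at a unibranch point with a globalization statement for Rosenlicht differentials, and then to read off (i)--(iii) from a single valuation count. Here is the local picture. For $P$ unibranch one has $\wwp=\{\eta\in\Omega_{k(C)|k}\,:\,\mathrm{Res}_{\pb}(f\eta)=0\ \text{for all}\ f\in\op\}$. Fixing a local parameter $t$ of $\pb$ and the induced identification of differentials with functions makes $\wwp$ a fractional $\op$-ideal with $\Omega^1_{\cb,\pb}\subseteq\wwp\subseteq t^{-\beta}\Omega^1_{\cb,\pb}$: the left inclusion because holomorphic differentials have no residue, the right because $\cp=t^{\beta}\obp\subseteq\op$. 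A one-line residue computation gives $\vpb(\wwp)\subseteq\{a:-1-a\notin\sss\}$, and the differentials $t^{a}\,dt$ show $\vpb(\wwp)\supseteq\{a\geq 0\}$; also $\wwp\not\subseteq t^{-\beta+1}\Omega^1_{\cb,\pb}$, since that would force $t^{\beta-1}\obp\subseteq\op$, contradicting $\beta-1\in{\rm G}$, so the minimum $-\beta$ of $\vpb(\wwp)$ is attained. Finally, as the residue pairing on the $2\beta$-dimensional space $t^{-\beta}\Omega^1_{\cb,\pb}/t^{\beta}\Omega^1_{\cb,\pb}$ is perfect and $\wwp$ is there the orthogonal complement of $\op/\cp$ (of dimension $\beta-\delta$), one gets $\dim_k(\wwp/\Omega^1_{\cb,\pb})=(2\beta-(\beta-\delta))-\beta=\delta$. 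This dimension equals the number of negative values occurring in $\vpb(\wwp)$, and $\{a<0:-1-a\notin\sss\}$ (which is in bijection with ${\rm G}$) also has $\delta$ elements, so the first inclusion is an equality: $\vpb(\wwp)=\{a:-1-a\notin\sss\}$.

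Next I would prove that the restriction map $H^0(\ww)\to\wwp/\Omega^1_{\cb,\pb}$ is surjective. Consider the exact sequence $0\to\pi_*\Omega^1_{\cb}\to\ww\to\mathcal{T}\to 0$ on $C$, where $\mathcal{T}$ is a torsion sheaf (supported on $\mathrm{Sing}(C)$) with stalk $\mathcal{T}_P=\wwp/\Omega^1_{\cb,\pb}$ at $P$. Since $h^1(\pi_*\Omega^1_{\cb})=h^1(\cb,\Omega^1_{\cb})=1=h^1(C,\ww)$ by Serre duality on $\cb$ and on $C$, the cohomology sequence forces the connecting map $H^0(\mathcal{T})\to H^1(\pi_*\Omega^1_{\cb})$ to vanish, whence $H^0(\ww)\twoheadrightarrow H^0(\mathcal{T})\twoheadrightarrow\mathcal{T}_P$.

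Now I assemble the three assertions. Pick $\eta_0\in\wwp$ with $\vpb(\eta_0)=-\beta$ and lift its (nonzero) class under the above surjection to some $\lambda\in H^0(\ww)$; then $\lambda-\eta_0\in\Omega^1_{\cb,\pb}$ has nonnegative value, so $\vpb(\lambda)=-\beta$, which is (ii). For this $\lambda$ one has $\lambda\cp=\Omega^1_{\cb,\pb}$, both equal to $\obp\,dt$ because $\cp=t^{\beta}\obp$ and $\vpb(\lambda)+\beta=0$; hence the surjectivity statement above reads $\wwp=H^0(\ww)+\lambda\cp$, and dividing by $\lambda$ gives $\wwp/\lambda=H^0(\ww)/\lambda+\cp$, which is (i). Finally $\vpb(\eta/\lambda)=\vpb(\eta)+\beta$ for $\eta\in\wwp$, so $\vpb(\wwp/\lambda)=\{a:-1-a\notin\sss\}+\beta=\{b:(\beta-1)-b\notin\sss\}=\{b:\gamma-b\notin\sss\}=\kk$, which is (iii).

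The step I expect to be the main obstacle is the value-set identity $\vpb(\wwp)=\{a:-1-a\notin\sss\}$ for a possibly non-monomial unibranch ring: only the inclusion ``$\subseteq$'' is immediate, and promoting it to an equality rests on the length computation $\dim_k(\wwp/\Omega^1_{\cb,\pb})=\delta$, i.e.\ on handling the residue pairing and the orthogonal complement correctly. The globalization step and the concluding valuation bookkeeping are then routine.
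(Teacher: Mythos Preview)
Your proof is correct and takes a genuinely different route from the paper's. The paper's own argument is a three-line citation: (i) is attributed to the last paragraph of the proof of \cite[Lem.~6.1]{KM}, (ii) to \cite[Lem.~2.8]{KM}, and (iii) to \cite[Thm.~2.11]{S}. You instead reconstruct those facts from scratch in the unibranch setting: the residue description of $\wwp$, the perfect-pairing dimension count giving $\dim_k(\wwp/\Omega^1_{\cb,\pb})=\delta$ and hence the value-set identity $\vpb(\wwp)=\{a:-1-a\notin\sss\}$, and the globalization via the exact sequence $0\to\pi_*\Omega^1_{\cb}\to\ww\to\mathcal{T}\to 0$ together with $h^1(\pi_*\Omega^1_{\cb})=h^1(\ww)=1$. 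What this buys you is a self-contained and transparent argument; what the paper's approach buys is brevity and a pointer to where these facts live in greater generality. Two minor comments on your write-up: first, your framing of the pairing as living on a single $2\beta$-dimensional space is a bit unusual --- it is more cleanly stated as a perfect pairing between $\obp/\cp$ and $t^{-\beta}\Omega^1_{\cb,\pb}/\Omega^1_{\cb,\pb}$ (each of dimension $\beta$), whence $\dim(\wwp/\Omega^1_{\cb,\pb})=\beta-(\beta-\delta)=\delta$ directly; second, the separate verification that $-\beta$ is attained is redundant once the equality of value sets is established, since $\beta-1\in{\rm G}$ already gives $-\beta\in\{a:-1-a\notin\sss\}$.
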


\begin{proof}
Item (i) follows from the last paragraph of the proof of \cite[Lem. 6.1]{KM}; item (ii) follows from \cite[Lem. 2.8]{KM}; and item (iii) follows from \cite[Thm. 2.11]{S}.
\end{proof}


\section{Computing the Canonical Model}
\label{sechyp}
 
In this section we are particularly interested on the study of the canonical model $C'$. We first derive a general result for arbitrary curves, not necessarily rational and unicuspidal. And then apply it to particular simple examples, where we are also able to depict the curves we are dealing with.

\subsection{Hypersurfaces containing the Canonical Model} To begin with, we fix some notation. Following \cite[Def.~2.7]{KM}, set
$$
\eta_P:=\delta_P-\dim(\op/\mathcal{C}_P)
$$
and $\eta :=\sum_{P\in C}\eta_P$. Let $\widehat{C}:=\text{Proj}(\oplus\,\ww ^n)$ be the canonical blowup (defined in Sec.~\ref{seccan}) and $\widehat{\pi}:\ch\to C$ the natural map. Thus set 
$\oo_{\ch}\ww:=\widehat{\pi}^*\ww/{\rm Torsion}(\widehat{\pi}^*\ww)$ and $\oh\ww:=\widehat{\pi}_*(\oo_{\ch}\ww)$. We define
$$
\sigma:={\rm min}\{n\in\nn\,|\, \ww^n=(\oh\ww)^n\}
$$
Set also
$$
I_n(C'):=\{\text{hypersurfaces of degree $n$ on $\mathbb{P}^{g-1}$ containing $C'$}\}
$$
With this in mind we prove the following result.

\begin{thm}
\label{thmdim}
Let $C$ be an integral and projective curve of genus $g$ with canonical model $C'$ of genus $g'$. Assume $n\geq\sigma$. Then
\begin{itemize}
\item[(i)] The natural map
$$
{\rm Sym}^n H^0(\ww) \longrightarrow H^0(\oo_{C'}(n))
$$
is surjective
\item[(ii)] 
$$
\dim(I_n(C'))=\binom{g+n-1}{n}-n(2g-2-\eta)-1+g'
$$
\end{itemize}
\end{thm}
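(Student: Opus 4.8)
The plan is to deduce both statements from a version of the Noether-type theorem for the canonical blowup together with a Riemann--Roch computation on $\ch\cong C'$.

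First I would establish (i). Since $\widehat C\cong C'$ (by \cite[Thm.~6.4]{KM}) and, for $n\geq\sigma$, we have $\ww^n=(\oh\ww)^n$, the sheaf whose global sections compute the $n$-th graded piece of the homogeneous coordinate ring of $C'\subset\pp^{g-1}$ is exactly $\oh\ww^{\,n}$, and its sections agree with $H^0(\oo_{C'}(n))$. The surjectivity of ${\rm Sym}^nH^0(\ww)\to H^0(\oo_{C'}(n))$ is then precisely the projective normality statement for the canonical model in degree $n$; this is the ``singular Noether theorem'' alluded to in the introduction (from \cite{GM}), which says that the canonical blowup is projectively normal, i.e. the map ${\rm Sym}^nH^0(\ww)\to H^0(\ch,\oo_{\ch}\ww^{\,n})$ is surjective for all $n\geq\sigma$ (and for $n\geq 2$ it would recover the classical statement in the smooth nonhyperelliptic case). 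So (i) is a direct invocation of that result once one identifies $\oo_{C'}(n)$ with $\oh\ww^{\,n}$ for $n\ge\sigma$.

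Next, for (ii), I would run the exact sequence
$$
0\longrightarrow I_n(C')\oplus\langle\text{trivial part}\rangle\longrightarrow {\rm Sym}^n H^0(\ww)\longrightarrow H^0(\oo_{C'}(n))\longrightarrow 0,
$$
more precisely: the space of degree-$n$ hypersurfaces containing $C'$ is the kernel of ${\rm Sym}^nH^0(\ww)\to H^0(\oo_{C'}(n))$, projectivized, so by (i),
$$
\dim I_n(C') = \dim \mathbb{P}\big({\rm Sym}^n H^0(\ww)\big) - \dim \mathbb{P}\,H^0(\oo_{C'}(n)) = \binom{g+n-1}{n}-1 - h^0(\oo_{C'}(n)).
$$
It remains to compute $h^0(\oo_{C'}(n))=h^0(\ch,\oo_{\ch}\ww^{\,n})$. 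I would do this by Riemann--Roch on the (possibly singular, but integral projective) curve $\ch$ of genus $g'$: for $n$ large enough $\oo_{\ch}\ww^{\,n}$ is nonspecial, so $h^0=\deg(\oo_{\ch}\ww^{\,n})-g'+1$, and $\deg(\oo_{\ch}\ww^{\,n})=n\deg(\oh\ww)$. The degree $\deg(\oh\ww)$ should come out to $2g-2-\eta$: indeed $\deg\ww=2g-2$ on $C$, and passing to the canonical blowup subtracts exactly the correction term measured by $\eta=\sum_P\eta_P=\sum_P(\delta_P-\dim(\op/\cp))$, which records how far $\ww$ is from being generated by its global sections at each point. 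Plugging in gives $h^0(\oo_{C'}(n))=n(2g-2-\eta)-g'+1$... wait, sign: $h^0 = n(2g-2-\eta) - g' + 1$, and hence $\dim I_n(C')=\binom{g+n-1}{n}-1-n(2g-2-\eta)+g'-1$, matching the claimed formula after one checks I have not double-counted a unit: the stated answer is $\binom{g+n-1}{n}-n(2g-2-\eta)-1+g'$, so I should be careful that the Riemann--Roch constant and the two $-1$'s from projectivization bookkeeping combine correctly.

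The main obstacle I anticipate is twofold: (a) justifying that $\oo_{\ch}\ww^{\,n}$ is nonspecial for every $n\geq\sigma$ (not merely for $n\gg 0$), which is what makes the clean Riemann--Roch computation valid in the stated range — this likely uses that $\oh\ww$ is already ``positive enough'' on $\ch$ together with the definition of $\sigma$, and may require the vanishing $H^1(\ch,\oo_{\ch}\ww^{\,n})=0$ proved or cited from \cite{KM} or \cite{GM}; and (b) pinning down $\deg(\oh\ww)=2g-2-\eta$ precisely, i.e. that the length of $\oh\ww/\ww\cdot\och$ (or the relevant local discrepancy) is exactly $\eta$. I would handle (b) locally at each non-Gorenstein point $P$ using Lemma \ref{lemlbd} and the definitions of $\delta_P$, $\dim(\op/\cp)$, $\eta_P$ from \cite[Def.~2.7]{KM}, summing over $P$; this is the computational heart of the argument, while (a) is the conceptual input making the projective-normality statement of (i) usable in (ii).
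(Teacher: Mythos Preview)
Your approach is essentially the paper's: identify $H^0(\oo_{C'}(n))$ with $H^0((\oh\ww)^n)$ via $\ch\cong C'$, invoke the singular Noether theorem, then apply Riemann--Roch on $\ch$. Two clarifications resolve your stated obstacles. First, the Noether theorem of \cite{GM} asserts surjectivity of ${\rm Sym}^nH^0(\ww)\to H^0(\ww^n)$ for \emph{every} $n\geq 1$; the hypothesis $n\geq\sigma$ enters only to identify $\ww^n$ with $(\oh\ww)^n$, hence $H^0(\ww^n)$ with $H^0(\oo_{C'}(n))$. Second, both of your obstacles (a) and (b) are direct citations rather than things to prove: $h^1((\oh\ww)^n)=0$ for all $n\geq 1$ is \cite[Prp.~5.2]{KM}, and $\deg(\oh\ww)=2g-2-\eta$ is \cite[Cor.~4.9]{KM} (the paper also notes that $\oh\ww$ is invertible, so $\deg((\oh\ww)^n)=n\deg(\oh\ww)$). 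Finally, your bookkeeping worry disappears once you note that in the paper $\dim I_n(C')$ denotes the vector-space dimension of the degree-$n$ ideal piece, not a projective dimension; the formula is simply $\binom{g+n-1}{n}-h^0(\oo_{C'}(n))$ with $h^0(\oo_{C'}(n))=n(2g-2-\eta)+1-g'$, and no projectivization $-1$'s are involved.
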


\begin{proof}
The linear series $\dd(\oo_{\ch}\ww,H^0(\ww))$ yields an isomorphism $\widehat{\kappa}: \widehat{C}\to C'$ owing to \cite[Thm. 6.4]{KM} with $\oo_{\ch}\ww=\widehat{\kappa}^*\oo_{C'}(1)$. This allows us to address the problem intrinsically, that is, $H^0(\oo_{C'}(n))=H^0((\oh\ww)^n)$. Now for $n\geq \sigma$, we have that $\ww^n=(\oh\ww)^n$ and, in particular, $H^0((\oh\ww)^n)=H^0(\ww^n)$. Now, by the singular version of the Noether Theorem, proved in \cite{GM}, we have that ${\rm Sym}^n H^0(\ww) \rightarrow H^0(\ww^n)$ is surjective for any $n\geq 1$, so (i) follows.

\medskip
To get (ii), we just have to compute $h^0((\oh\ww)^n)$. From \cite[Cor. 4.9]{KM}, we have that $\deg(\oh\ww)=2g-2-\eta$. On the other hand, $h^1((\oh\ww)^n)=0$ for every $n\geq 1$ owing to \cite[Prp.~5.2]{KM}. Thus 
\begin{align*}
\dim(I_n(C'))&=\dim({\rm Sym}^n H^0(\ww))-\dim(H^0(\oh\ww)^n)\\
             &=\binom{g+n-1}{n}-n(2g-2-\eta)-1+g'
\end{align*}
where the second equality owes to Riemmann-Roch and the fact that $\oh\ww$ is a line bundle (even if $\ww$ is not) and hence $\deg(\oh\ww)^n=n\deg(\oh\ww)$.
\end{proof}

\begin{rem}
\label{remrem}
\emph{There is a very useful way to compute the parameters that appear in the statement of the theorem. First off, notice that $\ww^n$ and $(\oh\ww)^n$ agree in the Gorenstein locus of $C$ for any $n\geq 1$. So we are reduced to check non-Gorenstein points. Lemma \ref{lemlbd} provides an embedding $\ww\hookrightarrow \mathcal{K}$ via $\lambda\in H^0(\ww)$ such that for any singular point $P\in C$, we have
$$
\cp\subset\op\subset \wwp \subset \oh_P\subset \obp
$$ 
Now, plainly, $(\oh\ww)_P=\oh_P$. Thus $((\oh\ww)^n)_P=\oh_P^n=\oh_P$. On the other hand, by the very definition of the canonical blowup, $\oh_P$ is the smallest subring of $\obp$ containing $\ww_P$. But, as $\vpb(\wwp)=\kk$, it follows that $\sigma_P = {\rm min}\{n\in\nn\, |\,  n\kk = \langle \kk \rangle\}$, where $n\kk$ stands for the set of sums of $n$ elements of $\kk$, while $\langle \kk \rangle$ is the semigroup generated by $\kk$. And $\sigma$ is the largest among the $\sigma_P$. Also, \cite{KM} provides an alternative way of computing the parameter $\eta_P$, namely, $\eta_P=\dim(\wwp/\op)$, from which we get that $\eta_P=\#(\kk\setminus\sss)$.}
\end{rem}

\subsection{Examples} We now give a graphic description of the canonical model $C'$ in the first four cases of Theorem \ref{thmgn6} ahead. The techniques of describing $C'$ parametrically are developed in next section. What the examples have in common is that we are able to draw both the curve and (at least two) surfaces containing it, as $C'$ is either in the plane ($g=3$) or in three-dimensional space ($g=4$). We recall by \cite{LMS}, that if $C$ is $d$-gonal, then $C'$ is contained in a $(d-1)$-fold scroll in $\mathbb{P}^{g-1}$ of which the rulings cut out a $g_d^1$ on $C'$. So in the first case, $C'\subset\mathbb{P}^2$, which is the scroll itself, and the rulings are a pencil of lines meeting the curve generically at $3$ points. For the other three cases, $C'$ is a curve in $\mathbb{P}^3$ contained in a $2$-fold scroll (which is a quadric) and the rulings meet $C'$ generically in $3$ points as well since all these cases correspond to trigonal curves. 

\medskip
\noindent\textbf{case (i):} $C=(1+at\, :\, t^3 :\, t^5\, :\, t^6:\, t^7)$ and  $C'=(1+at\, :\, t^2\,:\,t^3)\subset\mathbb{P}^2$. Writing $\mathbb{P}^2=\{(w:x:y)\}$, we have that $C'$ is given by the cubic $x^3+ax^2y-y^2w$. 

\medskip
\noindent\textbf{case (ii):} $C=(1+at+bt^2\, :\, t^3 :\, t^7\, :\, t^8)$ (of genus 4) and the canonical model is $C'=(1+at+bt^2\, :\, t+at^2\, :\, t^3\,:\,t^4)\subset\mathbb{P}^3$ $\{(w:x:y:z)\}$. Clearly, $C'\cong\pum$, so $g'=0$. Also, as $C$ has just one singular point $P$, we have that $\eta=\eta_P$ and it can be computed as $\eta=\dim(\wwp/\op)=\#(\kk\setminus\sss)=\#\{1,4\}=2$. Set $\sssh:=\vpb(\ohp)$. We have that $\sssh$ agrees with the semigroup generated by $\kk$. Besides, $\sssh\setminus\kk=\{2,5\}$ and both numbers are expressed as sum of two elements of $\kk$, namely,  $2=1+1$ and $5=1+4$. It follows that $\wwp^2=\ohp$ and $\sigma=2$. So we are in position to apply Theorem \ref{thmdim}. We have that $\dim(I_2(C'))=\binom{4+2-1}{2}-2(2\times 4-2-2)-1=1$ and 
$$
I_2(C')=k(by^2+xy-zw) 
$$
Now $\dim(I_3(C'))=\binom{6}{3}-3\times 4-1=7$ and one computes
\begin{align*}
I_3(C')&= k(x^3-axyw+2bxzw+b^3y^3-aby^2w-b^2yzw-yw^2) \\
&\ \ \ \oplus k(x^2y-xzw-b^2y^3+byzw)\oplus k(x^2z-b^2y^2z-y^2w-ayzw+bz^2w)\\
&\ \ \ \oplus k(xy^2+by^3-yzw)\oplus k(xyz+by^2z-z^2w)\oplus k(xz^2-y^3-ay^2z)\\
&\ \ \ \oplus k(-xyw-by^2w+zw^2) 
\end{align*}

\medskip
\noindent\textbf{case (iii):} $C=(1+at+bt^2\, :\, t^4 :\, t^5\, :\, t^7\, :\, t^8)$, of genus 4 as well, and the canonical model is $C'=(1+at+bt^2\, :\, t^3\, :\, t^4\,:\,t^5)\subset\mathbb{P}^3$ $\{(w:x:y:z)\}$. Here $C'$ is the unique (up to isomorphism) unicuspidal non-Gorenstein curve of genus $2$. Also, $\kk\setminus\sss=\{3\}$ so $\eta=1$ and $\sssh\setminus\kk=\{6\}$ so $\sigma=2$. Therefore $\dim(I_2(C'))=10-2(2\times 4-2-1)-1+2=1$ and 
$$
I_2(C')=k(y^2-xz) 
$$
while $\dim(I_3(C'))=20-3\times 5-1+2=6$ and one computes
\begin{align*}
I_3(C')& = k(x^2z-xy^2)\oplus k(xzw-y^2w)\oplus k(-xyz+y^3)\oplus k(y^2z-xz^2)\\
&\ \ \ \oplus k(yzw-x^3-ax^2y-bxy^2)\oplus k(z^2w-x^2y-axy^2-bxyz) 
\end{align*}

\medskip
\noindent\textbf{case (iv):} $C=(1+at\, :\, t^4 :\, t^6\, :\, t^7\, :\, t^9\, :\, t^{10})$, also of genus 4, and the canonical model is $C'=(1+at\, :\, t^2\, :\, t^3\,:\,t^4)\subset\mathbb{P}^3$ $\{(w:x:y:z)\}$. Now $C'$ is the ordinary cusp of genus $1$. Besides, $\kk\setminus\sss=\{2,3\}$ so $\eta=2$ and $\sssh\setminus\kk=\{5\}$ so $\sigma=2$. Thus $\dim(I_2(C'))=10-2(2\times 4-2-2)-1+1=2$ and 
$$
I_2(C')=k(y^2-xz)\oplus k(zw-x^2-axy)
$$
while $\dim(I_3(C'))=20-3\times 4-1+1=8$ and we have
\begin{align*}
I_3(C')&:= k(xy^2-x^2z)\oplus (xzw-x^3-ax^2y)\oplus k(y^3-xyz)\oplus k(y^2z-xz^2)\\
&\ \ \ \oplus k(y^2w-x^3-ax^2y)\oplus k(yzw-x^2y-ax^2z)\oplus k(z^2w-x^2z-axyz)\\
&\ \ \ \oplus k(zw^2-x^2w-axyw)
\end{align*}

As examples, we picture $C'$ contained in (ii) $\{ z=xy\}\cap\{ y^3=xz^2-y^2z\}$,\\ (iii) $\{ y^2=xz\}\cap\{ yz=x^3-x^2y\}$, and (iv) $\{ z=x^2+xy\}\cap\{ y^2=x^3+x^2y\}$ 

 \begin{figure}[h]
 \centering
\includegraphics[scale=0.6]{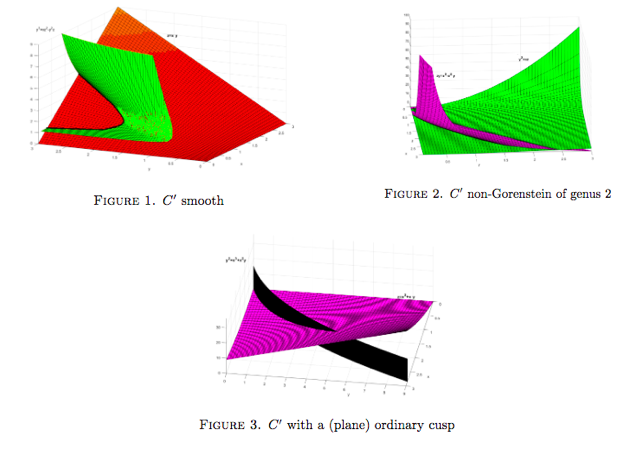}
\end{figure}

\section{Gonality and Parametric Description of Canonical Models}
\label{cap2}

In this section we develop a method of computing the gonality of a rational unicuspidal curve and describe parametrically its canonical model in Theorem \ref{thmggr}. Then we apply it to a family of curves with a prescribed singularity with one block of values in Theorem \ref{general}, also stating a sibling result in Theorem \ref{general1}. The method can be applied to any rational curve with a singleton singularity. The ones we choose target a full classification of unicuspidal rational curves of genus at most $6$ in Theorem \ref{thmgn6} in next section.



\medskip
To begin with, and for the sake of simplicity, if a parametrized curve is given by
$$
C=(F_0(t,s):\ldots :F_{N}(t,s))\subset \mathbb{P}^N
$$
where the $F_i$ are homogeneous polynomials, we write it just as 
\begin{equation}
\label{equpar}
C=(f_0(t):\ldots :f_N(t))\subset \mathbb{P}^N
\end{equation}
where $f_i(t)=F_{i}(t,1)\in k[t]$.

\medskip 
If $C$ as above is unicuspidal, we may also give it a totally intrinsic description. Indeed, so let $P$ be the unique singular point of $C$. Let $\pb\in \cb=\pum$ denote the preimage of $P$ under the normalization map $\pi:\pum\to C$. 
Write $k(C)=k(t)$ and $\pum=\C\cup\{\infty\}$ so that $t$ is the identity function at finite distance, and $\pb=0$. 

\medskip
Note that this intrinsic approach mathces the extrinsic one (\ref{equpar}) in the sense that the parameter $t$ is the same, that is, the ``abstract" local ring of $P$ reads
\begin{equation}
\label{equsum}
\oo_P=k+ k\, \frac{f_{1}(t)}{f_0(t)}+ \cdots + k\, \frac{f_{N}(t)}{f_0(t)}+ \cp 
\end{equation}
We also make the convention that $Q$ will always denote the point of $C$ at ``infinity", that is, $Q:=\pi(\infty)$ according to our convention. We just warn the reader not to make confusion with the points of $C$ which lie at the ``hyperplane at infinity" on $\mathbb{P}^N$, that is, the images $\varphi(c_i:1)$ of the roots $c_i$ of $F_0$, where the latter morphism is given by $\varphi:(t:s)\mapsto (F_0(t,s):\ldots :F_{N}(t,s))$. 

\medskip
We also adopt the following notation: given a space $V=\langle f_1,\ldots,f_s\rangle \subset k(C)$, the sheaf  $\aaa=\oo_C\langle V\rangle=\mathcal{O}_C \langle f_1,\ldots,f_s\rangle$ stands for the subsheaf of the constant sheaf of rational functions $\mathcal{K}$ generated by the sections $f_i$.

\medskip
Finally, we set $\gon(C)$ for the gonality of $C$; $\gon_F(C)$, for the smallest $d$ such that there exists a degree-$d$ cover $C\to\pum$, and $\gon_B(C)$ for the smallest $d$ such that $C$ carries a $g_d^1$ with a non-removable base point. Then $\gon(C)={\rm min}(\gon_B(C),\gon_F(C))$ 




\begin{thm}
\label{thmggr}
Let $C$ be a unicuspidal rational curve whose singularity $P$ has multiplicity $\alpha$ and conductor $\beta$. Let $\aaa$ be a torsion sheaf of rank $1$ on $C$. Then:
\begin{itemize}
\item[(i)] If $\aaa$ computes $\gon(C)$ or $\gon_F(C)$ then $\aaa=\mathcal{O}_C\langle1,t^r f/h\rangle$, with $f,h\in k[t]$ with ${\rm gcd}(f,h)=1$ and $f(0)\neq0\neq h(0)$, and $r>0$.
\item[(ii)] We have that
\begin{equation}\label{desi1}
\deg_{C\setminus P}(\aaa)=\deg(h)+\text{max}\{0,r+\deg(f)-\deg(h)\}.
\end{equation}
\item[(iii)] $\gon_F(C)\geq \alpha$; in particular, $\gon_F(C)=g+1$ if $C$ is nearly normal.
\item[(iv)] The canonical model of $C$ is of the form:
$$
C'=(f_0:\ldots:f_{g-1})\subset \mathbb{P}^{g-1}
$$
with $\{v_{\pb}(f_i)\}=\kk^{\circ}:=\{a\in\kk\,|\,a<\beta\}$ and $f_{g-1}=t^{\beta-2}.
$
\end{itemize}
\end{thm}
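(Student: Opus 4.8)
The plan is to establish the four items in turn, exploiting the fact that on a rational curve $k(C)=k(t)$ every torsion-free rank-$1$ sheaf $\aaa$ embedded in $\mathcal{K}$ is determined by its stalks, and away from $P$ it is automatically invertible, hence locally $\op$-free, so $\aaa=\oo_C\langle V\rangle$ for $V=H^0(\aaa)$ up to twisting by a rational function; after scaling we may assume $1\in V$. For (i), I would argue that since $C\setminus P\cong\mathbb A^1$ is affine and $P$ is the only possible non-removable base point, a sheaf computing $\gon(C)$ or $\gon_F(C)$ with $h^0=2$ must have $V=\langle 1, \phi\rangle$ for some rational function $\phi=g(t)/h(t)$ in lowest terms; the condition $f(0)\neq0\neq h(0)$ comes from the requirement that $\aaa_P$ be as ``close to free'' as the gonality forces (for $\gon_F$ one needs $P$ not a base point at all, so $\phi$ must be a unit at $\pb$ after subtracting a constant, forcing the stated normalization with $\phi=t^r f/h$, $r>0$; for $\gon$ one still gets this shape), and $r>0$ records that $\phi$ vanishes to positive order at $\pb$ so that $1,\phi$ are a genuine pencil meeting the cusp. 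For (ii), the degree of $\aaa$ restricted to $C\setminus P$ is computed as the number of zeros of the sections of $\aaa^{-1}$ at the points of $\mathbb A^1$, equivalently the degree of the finite map $(1:\phi)\colon\mathbb A^1\to\pum$ counted over the affine part: the fiber over $0$ contributes the zeros of $t^r f$, i.e. $r+\deg(f)$ if $r+\deg f\ge \deg h$ and the fiber over $\infty$ contributes $\deg h$ otherwise — taking the maximum of the two, plus the base contribution $\deg h$, gives the displayed formula $\deg_{C\setminus P}(\aaa)=\deg(h)+\max\{0,r+\deg(f)-\deg(h)\}$; this is essentially a bookkeeping of valuations of $\phi$ at the finite points other than $\pb$.

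For (iii), I would use that a degree-$d$ cover $C\to\pum$ corresponds to a base-point-free $g^1_d$, i.e. a sheaf $\aaa$ with $h^0(\aaa)\ge 2$ and $\aaa_P$ free over $\op$; then $\aaa_P\cong\op$ and the two sections $1,\phi$ lie in $\op$, so $v_{\pb}(\phi)\in\sss_P$. Since $\phi$ is non-constant and vanishes at $\pb$ (after normalizing), $v_{\pb}(\phi)\ge\alpha=\min(\sss_P\setminus\{0\})$, and this $v_{\pb}(\phi)$ is a lower bound for the number of preimages of the point $\phi(\pb)$ hence for $d$; this yields $\gon_F(C)\ge\alpha$. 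For the ``nearly normal'' clause, in that case $\mmp=\cp$ so $\alpha=\beta$, and $\beta-1=\delta=g$ (since $C$ rational unicuspidal has $\delta=g$), hence $\alpha=g+1$; combined with $\gon_F(C)\ge\alpha=g+1$ and the existence of a degree-$(g+1)$ cover (project from a general point, or use the section $t^{g+1}\in\cp\subset\op$), we get equality. This is where I would lean on \eqref{equdlt} and the definition of $\beta$.

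For (iv), I would use Lemma \ref{lemlbd}: choosing $\lambda\in H^0(\ww)$ with $v_{\pb}(\lambda)=-\beta$ and $v_{\pb}(\ww_P/\lambda)=\kk$, the embedding $\ww\hookrightarrow\mathcal{K}$ via division by $\lambda$ sends $H^0(\ww)$ to a space of rational functions in $t$ with valuations exactly the elements of $\kk^\circ=\{a\in\kk: a<\beta\}$ — here I must check that there are exactly $g$ such elements (which follows from $\#(\kk\setminus\sss)$ being finite and a symmetry/counting of $\kk$ modulo $\beta$, since $\ww_P$ sits between $\cp$ and $\obp$ with the right codimensions) and that a generator of valuation $\beta-2$ can be taken to be $t^{\beta-2}$ itself (the top gap is $\beta-1$, the penultimate relevant value is $\beta-2\in\kk$, and scaling fixes the leading coefficient). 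Then the canonical map $\kappa\colon\cb\to\pp^{g-1}$ is, in the chart $t$, given by $(f_0:\cdots:f_{g-1})$ with these $f_i$, so $C'=\kappa(C)$ has the asserted parametric form. The main obstacle I anticipate is (iv): precisely pinning down that one can choose the basis of $H^0(\ww)/\lambda$ to consist of \emph{polynomials} in $t$ with distinct valuations filling out $\kk^\circ$, and in particular that $f_{g-1}=t^{\beta-2}$ exactly — this needs the interplay between Lemma \ref{lemlbd}(i), which controls $\ww_P/\lambda$ modulo $\cp$, and the global rationality of $C$, which forces the sections to be Laurent polynomials with poles only at $Q$; the polynomiality then follows by clearing denominators against a suitable power of $t$, but the details of why no extra poles appear and why the valuations are exactly $\kk^\circ$ (not a subset) require the $h^1$-vanishing / the equality $\dim H^0(\ww)=g$ together with the fact that distinct valuations force linear independence.
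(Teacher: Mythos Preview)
Your overall strategy matches the paper's, but several steps are either missing or garbled.

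\medskip
\textbf{Part (i).} You assume $h^0(\aaa)=2$ without justification. The paper proves this via the exact sequence $0\to\aaa(-R)\to\aaa\to\aaa/\aaa(-R)\to 0$ for a regular point $R$: taking Euler characteristics gives $(h^0(\aaa)-h^0(\aaa(-R)))+(h^1(\aaa(-R))-h^1(\aaa))=1$, and minimality of $d$ forces $h^0(\aaa(-R))\le 1$, hence $h^0(\aaa)=2$. You also do not explain how one arranges $r>0$. The paper's moves are concrete: replace $z$ by $z^{-1}$ if $r<0$ (harmless since $\deg(z\aaa)=\deg(\aaa)$), then replace $z$ by $z-z(0)$ to force $r>0$. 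Your sentence ``$r>0$ records that $\phi$ vanishes to positive order'' is a description, not an argument.

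\medskip
\textbf{Part (ii).} Your explanation is incoherent as written: ``taking the maximum of the two, plus the base contribution $\deg h$'' does not yield the displayed formula. The clean route (and the paper's) is to use $\deg(\aaa)=\sum_R\operatorname{length}(\aaa_R/\oo_R)$ and compute each local length directly: at a root $c_i$ of $h$ of multiplicity $m_i$ the length is $m_i$; at $Q=\infty$ the length is the pole order of $t^rf/h$ there, namely $\max\{0,r+\deg f-\deg h\}$; elsewhere on $C\setminus\{P\}$ it is zero. Summing gives \eqref{desi1}. Fibers of the map $(1:\phi)$ are not the right bookkeeping device here, since the fiber over $0$ includes $\pb=P$, which you must exclude.

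\medskip
\textbf{Part (iii).} This is essentially the paper's argument. One point to tighten: ``$v_{\pb}(\phi)$ is a lower bound for the degree of the cover'' is true but should be justified via \eqref{desi1} (or equivalently by counting the fiber over $0$ of the map $\phi:\pum\to\pum$), together with $\deg_P(\aaa)=0$ when $\aaa$ is locally free.

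\medskip
\textbf{Part (iv).} You are worrying about the wrong thing. The theorem does not assert that all the $f_i$ are polynomials, only that their valuations fill out $\kk^\circ$ and that $f_{g-1}=t^{\beta-2}$. The valuation claim follows exactly as you say from Lemma~\ref{lemlbd}(i),(iii): $\wwp=H^0(\ww)+\cp$ and $v_{\pb}(\wwp)=\kk$ give $v_{\pb}(H^0(\ww))=\kk^\circ$, and $\#\kk^\circ=\#G=\delta_P=g$ since $C$ is rational unicuspidal. What you are missing is the reason $f_{g-1}$ can be taken equal to $t^{\beta-2}$: the paper invokes \cite[Prp.~2.14]{KM}, which gives $\deg(C')=\beta-2$. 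Since $\beta-2\in\kk^\circ$ is the largest valuation appearing, and the degree of the parametrized curve bounds the degrees of the $f_i$, the section of valuation $\beta-2$ must be (a scalar multiple of) $t^{\beta-2}$ itself. Without this external input your ``scaling fixes the leading coefficient'' does not rule out higher-order terms.
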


\begin{proof}
To compute the gonality, as seen in Section \ref{secs21}, we look for the smallest $d$ for which there exists a torsion free sheaf $\aaa$ on $C$ of rank $1$, such that $\deg(\aaa)=d$ and $h^0(\aaa)\geq 2$. 
First, if $\aaa$ computes the gonality of $C$, then it is generated by global sections, since otherwise $\oo_C\langle H^0(\aaa)\rangle$ is a sheaf of smaller degree satisfying $h^0(\aaa)\geq 2$.  Also, the pencil induced by $\aaa$ is complete. Indeed, for later use in the proof we follow \cite{LMS}: choose any regular point $R\in C$ and consider the sequence
$$
0\longrightarrow \aaa(-R)\longrightarrow \aaa \longrightarrow \aaa/\aaa(-R)\longrightarrow 0
$$
Taking Euler characteristic yields 
$$
\big(h^0(\aaa)-h^0(\aaa(-R))\big)+\big(h^1(\aaa(-R))-h^1(\aaa)\big)=1
$$
Now both summands are nonnegative; besides, $h^0(\aaa)\geq 2$ and we also have that $h^0(\aaa(-R))\leq 1$ because this sheaf is of degree $d-1$ and the gonality of $C$ is $d$. Thus $h^0(\aaa)=2$ as desired.  

\medskip
Therefore, if $\aaa$ computes the gonality of $C$, then it has two independent global sections which must generate it. But any torsion free sheaf of rank $1$ with a global section can be embedded in the constant sheaf of rational functions $\mathcal{K}$ so that
\begin{equation}
\label{equtfs}
\oo \subset \aaa \subset \mathcal{K}
\end{equation}
So we may write  $\aaa=\mathcal{O}_C\langle1,z\rangle$ where $z=t^r f/h$, and $f,h\in k[t]$ have non common factors no one having zero as a root either. We may first assume that $r\geq 0$, i.e, $z$ does not have a pole on $P$. Indeed, as our concern is gonality, we are mainly interested on $\deg(\aaa)$. But note that $\deg(x\aaa)=\deg(\aaa)$ for any $x\in k(C)$, thus we may pass to $z^{-1}\aaa=\oo_C\langle 1,z^{-1}\rangle$ if $r<0$. We may further assume $r>0$ just replacing $z$ by $z-z(0)$ if necessary, so (i) is proved.

\medskip
To prove (ii), the degree is defined as $\deg(\aaa)=\chi(\aaa)-\chi(\oo)$, but applying (\ref{equtfs}). we are reduced to the formula
\begin{equation}
\label{equgra}
\deg(\aaa)=\sum_{R\in C} {\rm length}(\aaa_R/\oo_R)
\end{equation}
owing to \cite[pp.~107-108]{S}.

\medskip
Hence, to compute the degree of $\aaa=\mathcal{O}_C\langle1,t^r f/h\rangle$ via (\ref{equgra}), the local length at each $R\in C$ may be computed as follows
$$
\deg_R(\aaa):={\rm length}({\aaa}_R/\oo_R)=\dim_k ((\oo_R+t^r f/h\cdot\oo_R)/\oo_R).
$$
But from our conventions, the regular points of $C$, i.e., the points in $C\setminus\{P\}$ are $Q$ and the $\pi$-image of each nonzero $c\in\C$. Writing $h=(t-c_1)^{m_1}\ldots(t-c_l)^{m_l}$ yields 
$$
\deg_R(\aaa)=
\begin{cases}
m_i & \text{if}\ R=\pi(c_i) \\
0 & \text{if}\ R\in C\setminus\{P,\pi(c_1),\ldots,\pi(c_l)\} \\
{\rm max}\{0,r+\deg(f)-\deg(h)\} & \text{if}\ R=Q
\end{cases}
$$
Summing up we get the formula stated in theorem.

\medskip
To prove (iii), suppose now $d$ is the smallest integer for which, there exists a degree-$d$ cover $C\to \pum$ or equivalently, for which there exists an invertible sheaf $\aaa$ such that $\deg(\aaa)=d$ and $h^0(\aaa)\geq 2$. First, $\aaa$ must be generated by global sections because $\oo_C\langle H^0(\aaa)\rangle$ is also a bundle and the same argument above applies. Also, if $\aaa$ is invertible, so is $\aaa(-R)$ because $R$ is regular, and we can use the same argument above to see that $h^0(\aaa)=2$. And finally, we may further assume $\aaa_P=\oo_P$. Indeed,  as $\aaa$ is locally free, then $\aaa_P=w\op$ for some $w\in k(C)$; so one replaces $\aaa$ by $w^{-1}\aaa$ if necessary. Therefore we can write $\aaa=\oo_C\langle 1,z\rangle$, with $z=t^rf/h$ as in (i), and such that $z\in\oo_P$ and $r\geq \alpha$.

\medskip
Now set $D:=\vpb(\aaa_P)\setminus \sss$. We have by \cite[Prp.~2.11]{BDF}
\begin{equation}
\label{equddd}
\deg_P(\aaa)=\#D 
\end{equation}
But as $z\in\op$, we get $\deg_P(\aaa)=0$. So (\ref{equddd}) yields
\begin{equation}
\label{equdbf}
d=\deg(h)+{\rm max}\{0,r+\deg(f)-\deg(h)\}
\end{equation}
If $r+\deg(f)-\deg(h)\leq 0$, then $d=\deg(h)\geq r+\deg(f)\geq \alpha$. Otherwise $d=r+\deg(f)\geq \alpha$ as well and we are done. To finish the proof, just note that if $C$ is nearly normal then $\alpha=\beta=g+1$.

\medskip
To get (iv), as priorly seen, $C'$ is the image of the morphism $\kappa :\overline{C}\rightarrow\pp^{g-1}$ induced by the linear series $\sys(\oo_{\overline{C}}\ww,H^0(\ww))$. But in our case $\cb=\pum$. So embedding $\ww$ in $\mathcal{K}$ and selecting $g$ independent global sections, say $\{ 1,F_1,\ldots F_{g-1}\}_{i=0}^{g-1}\subset k(t)$, the canonical model may be written as $C'=(f_0:\cdots :f_{g-1})$ where $F_i=f_i/f_0$.

\medskip
Now we will see how $\ww$ behaves locally. 
Embed $\ww$ on $\mathcal{K}$ via $\lambda$ as in Lemma \ref{lemlbd}, that is, if no confusion is made, replace $\ww$ by $\ww/\lambda$. 
Lemma \ref{lemlbd}.(iii) yields
\begin{equation}
\label{equeq1}
\vpb(\ww_P)=\kk
\end{equation}
On the other hand, by Lemma \ref{lemlbd}.(i) we have
\begin{equation}
\label{equeq2}
\ww_P = H^0(\ww) + \cp
\end{equation}
Now we claim that $\#(\kk^{\circ})=g$. Indeed, $\#\kk^{\circ}=\#(G)$  by the very definition of $\kk$. But $\#(G)=\delta_P$, the singularity degree of $P$ by (\ref{equdlt}). Since $\overline{g}=0$ and $P$ is a singleton singularity, the genus formula $g=\overline{g}+\sum_{R\in C}\delta_R$ \cite[Eq.~(1.3)]{S} yields $\delta_P=g$ and the claim follows. So combining (\ref{equeq1}) and (\ref{equeq2}) we get 
$$
\vpb(H^0(\ww))=\kk^{\circ}
$$
So we just may adjust the $f_i$ so that $\{v_{\pb}(f_i)\}=\kk^{\circ}$. To finish the proof we recall that 
by \cite[Prp. 2.14]{KM}, the canonical model is a curve of degree $\beta-2$ in $\mathbb{P}^{g-1}$. But as $\beta-2\in\kk^{\circ}$, we must have $f_{g-1}=t^{\beta-2}$. \end{proof}

\medskip
The theorem above is certainly helpful to compute gonality and canonical models of any unicuspidal rational curve
But those methods apply once the parametric curve is given. So another interesting and independent problem is finding a suitable parametrization for a curve of which we know just the semigroup $\sss$. For instance, in Theorem \ref{general} ahead, there is an additional effort to look for a helpful parametrization for our purposes. The reasons why are two. By Theorem \ref{thmggr}.(iii), $C'$ is of degree $\beta-2$ in $\mathbb{P}^{g-1}$, hence it is better to get rid of powers like $t^{\beta-1}$ in the parametrization of $C$ to describe its $C'$. Also, to compute gonality, it's more convenient to concentrate on the coefficients in the first component. For instance, the plane curves $(1+t^3:t^2:t^4)$ and $(1:t^2+t^3:t^4)$ are clearly isomorphic but it is somewhat easier to see that it's trigonal in the first case, by taking the  section $1/(1+t^3)$ which yields a line bundle of degree 3. On the other hand, wasting all coefficients in the first component may fail to get all unicuspidal curves with semigroup $\sss$. So what we do is balancing, which is the motivation behind the parametrization (\ref{equcv0}) in the statement of Theorem \ref{general}.

\begin{ex}
\label{exeexe}
\emph{We now give an example hoping it will be helpful in the proof of Theorem \ref{general}, of which we follow the same notation for coefficients indexation. Consider the semigroup $\sss=\{0,4,5,8,\to \}$ which, according to the theorem statement, has parameters $\ell=2$ (nunber of consecutive values) and $m=2$ (number of last consecutive gaps).
If $C$ is rational with a singleton cusp $P$ with semigroup $\sss$ then 
\begin{equation}
\label{equrng}
\op=k\oplus k f_1(t)\oplus k f_2(t)\oplus t^8 \obp 
\end{equation}
where $f_1(t),f_2(t) \in k(t)=k(C)$. We may further take $f_1=t^4+b_{11}t^6+b_{12}t^7$ and $f_2=t^5+b_{21}t^6+b_{22}t^7$, and $C$ is of the form
$$
C=(1:f_1:f_{2}:t^{10}:t^{11})
$$
where the $t^{11}$ is added to get $\sss$ while $t^{10}$ to avoid a singularity at infinity. Theorem \ref{general} provides a way of passing to a parametrization of the form
$$ 
C=(1+a_1t+a_2t^2+a_3t^3\, :\, t^4+a_4t^6\, :\, t^5\, :\,t^{10} :\,t^{11}) \subset \mathbb{P}^4
$$
Essentially one writes $1/(1+a_1t+a_2t^2+a_3t^3)$ as a power series and then gets the $b_{ij}$ from the $a_{k}$ and conversely without escaping from the local ring.}

\medskip
\emph{Let us find the gonality of $C.$ First, recall that $\gon(C)\geq 3$ since $C$ is not nearly normal \cite[Thm.~2.1]{Mt}. Assume $\gon(C)=3$ computed by a non-locally free sheaf of the form $\mathcal{F}=\mathcal{O} \langle 1, t^rf/h \rangle$ as in Theorem \ref{thmggr}.(i). Note that  $\deg_P(\aaa_P)\geq 1$ since $\aaa$ is not a bundle. Then $\deg_{C\setminus P}(\aaa)\leq 2$.
Thus $r+\deg(f)\leq 2$ owing to (\ref{desi1}), and hence $r\leq 2$. If $r=2$ then $\{2,7\}\in D$  and thus $\deg_P(\aaa)\geq 2$ owing to (\ref{equddd}).  Hence $\deg_{C\setminus P}(\aaa)\leq 1$ and all possibilities are precluded by (\ref{desi1}). If $r=1$ then $\deg_P(\aaa)\geq 2$, so $\deg_{C\setminus P}(\aaa)\leq 1$. From (\ref{desi1}) we get $\deg(f)=0$, $\deg(h)\leq 1$ and $\deg_{C\setminus P}\mathcal{F}=1$. Therefore $\deg_P(\aaa)=2$. Now $\{1,6\}\subset D$ if $r=1$ and equality holds if and only if $7\not\in D$. So we have to check this to assure that $\aaa$ has degree $3$.}

\medskip
\emph{Thus take $\mathcal{F}=\mathcal{O} \langle 1, t/(at+b) \rangle$ with $b\neq 0$. A general element in $\aaa_P$ is written
\begin{align*}
f(t)&=c_0+c_4\frac{t^4+a_4t^6}{1+a_1t+a_2t^2+a_3t^3}+c_5\frac{t^5}{1+a_1t+a_2t^2+a_3t^3}+\\
&d_0\frac{t}{at+b}+d_4\frac{t^5+a_4t^7}{(at+b)(1+a_1t+a_2t^2+a_3t^3)}+d_5\frac{t^6}{(at+b)(1+a_1t+a_2t^2+a_3t^3)}+t^8h(t)\\
&=\bigg(c_0b+(c_0(a+ba_1)+d_0)t+(c_0aa_1+c_0ba_2+d_0a_1)t^2+(c_0aa_2+c_0ba_3+d_0a_2)t^3\\
                    &\ \ \ \ \ +(c_0aa_3+d_0a_3+c_4b)t^4+(c_4a+c_5b+d_4)t^5\\
                     &\ \ \ \ \ +(c_4a_4b+c_5a+d_5)t^6+(c_4aa_4+d_4a_4)t^7\bigg) p(t)+t^8q(t)
\end{align*}
where $p(t)=1/((at+b)(1+a_1t+a_2t^2+a_3t^3))$, which has valuation $0$ at $P$. To reach the value $7$ we should kill all smaller coefficients. This implies $c_0=d_0=c_4=0$, $c_5b+d_4=0$ and $c_5a+d_5=0$. As the $c_i$ and $d_i$ are arbitrary, one clearly sees that $7\not\in D$, that is, $\deg(\aaa)=3$, if and only if $a_4=0$, and hence $C$ is trigonal; otherwise $\deg(\aaa)=4$ and $C$ is tetragonal since $\gon_F(C)\geq 4$ by Theorem \ref{thmggr}.(iii).}

\medskip
\emph{Now we claim that $\gon_F(C)=4$. First, note that
$$
f(t)=\frac{t^4}{1+a_1t+(a_2-a_4)t^2+(a_3-a_1a_4)t^3}=\frac{t^4+a_4t^6}{1+a_1t+a_2t^2+a_3t^3}+t^8h(t)\in\op
$$
with $h(t)\in \obp$. Indeed, consider 
\begin{align*}
   f_1(t)&=1+a_1t+(a_2-a_4)t^2+(a_3-a_1a_4)t^3 \\
f_2(t)&=1+a_1t+a_2t^2+a_3t^3.
\end{align*}
We are looking for $h(t)\in\obp$ for which
$$
\frac{t^4}{f_1(t)}=\frac{t^4+a_4t^6}{f_2(t)}+t^8h(t)
$$
Now
\begin{align*}
\frac{t^4}{f_1(t)}=\frac{t^4+a_4t^6}{f_2(t)}+t^8h(t)
 &\Leftrightarrow \frac{t^4}{f_1(t)}-\frac{t^4+a_4t^6}{f_2(t)}=t^8h(t)\\
    &\Leftrightarrow t^4f_2(t)-(t^4+a_4t^6)f_1(t)=t^8h(t)f_1(t)f_2(t) \\
    &\Leftrightarrow (a_4-a_2)a_4t^8+(a_1a_4-a_3)a_4t^9=t^8h(t)f_1(t)f_2(t) \\
    &\Leftrightarrow ((a_4-a_2)+(a_1a_4-a_3)t)a_4t^8=t^8h(t)f_1(t)f_2(t) 
\end{align*}
Thus take 
$$
h(t)=\dfrac{((a_4-a_2)+(a_1a_4-a_3)t)a_4}{f_1(t)f_2(t)}
$$
and note that, clearly, $h(t)\in\obp$. So $\aaa:=\oo_C\langle 1,f(t)\rangle$ has degree $0$ at $P$ since $f(t)\in\op$ and $4$ outside owing to (\ref{desi1}). Thus $\deg(\aaa)=4$ and $\aaa$ is locally free since $\aaa_P=\op$.}
\end{ex}

\medskip
To state the next result, fix the following notation: for ${\rm v}=(\rm v_1,\ldots,\rm v_r)\in\mathbb{N}^r$, set
$$
t^{\rm v}:=(t^{\rm v_1}:\cdots: t^{\rm v_r})\subset\mathbb{P}^{r-1}
$$

\begin{thm}
\label{general} Let $C$ be a rational curve with a unique singularity which is unibranch with semigroup $\sss^*=\{0,\alpha,\alpha+1,\ldots,\alpha+\ell-1,\alpha+\ell+m\}$ as depicted.
\begin{figure}[h]
\includegraphics[scale=0.4]{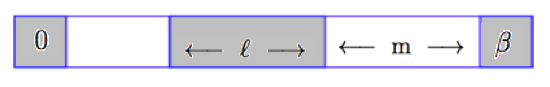}
\end{figure} 

Then $C$ is isomorphic to a curve of the form:
\begin{equation}
\label{equcv0}
C=\bigg(F_0: F_\alpha:  \cdots: F_{\alpha+\ell-2}:t^{\alpha+\ell-1}: t^{\rm v}\bigg)\subset \mathbb{P}^{N}
\end{equation}
where 
\begin{align*}
F_0&=1+a_1t+\cdots+a_{\ell+m-1}t^{\ell+m-1} \\
F_i&=t^i+\sum_{j=1}^{m-1}a_{\ell+(i-\alpha+1)(m-1)+j}t^{\alpha+\ell+j-1}\ \text{for}\ \alpha\leq i\leq\alpha+\ell-2\\
{\rm v}&=(\beta,\ldots, 2\alpha-1,2\alpha+2\ell-1, \ldots,\beta+\alpha-1)\ \text{and}\ N=\alpha-\ell+1\ \text{if}\ \ell\leq m,\\
{\rm v}&=(\beta,\ldots, 2\alpha-1)\ \text{and}\  N=\alpha-m\ \text{if}\ \ell>m.
\end{align*}

Moreover, the canonical model of $C$ given in terms of its coefficients is:

\begin{itemize}
\item [(i)] if $m=1$, then 
$$
C'=\bigg(1+\sum_{i=1}^{\ell}a_it^i: t^{\ell+1}: \cdots: t^{\alpha+\ell-1}\bigg)\subset \mathbb{P}^{g-1}.
$$

\item [(ii)] if $\ell=1$, then 
$$
C'=\bigg(1+\sum_{i=1}^{m}a_it^{i}:~h_1:~\cdots:~h_{m-1}:~t^{m+1}: \cdots: ~t^{\alpha+m-1}\bigg)\subset\mathbb{P}^{g-1}
$$ 
where
$$
h_i=\sum_{j=0}^{m-i}a_jt^{i+j}\ \text{for}\ 1\leq i\leq m-1.
$$

\item [(iii)] if $m=2$, then 
$$
C'=\bigg(1+\sum_{i=1}^{\ell+1}a_it^i:~t+\sum_{i=1}^{\ell}b_{2(\ell-i)+1}t^{i+1}:~h_{\ell+2}:\cdots:h_{\alpha+\ell}\bigg)\subset\mathbb{P}^{g-1}
$$ 
where
\begin{align*}
h_i &=\begin{cases}
t^i& ~~~~~\text{for}\ \ell+2\leq i\leq\alpha-1~\mbox{and}~i=\alpha+\ell-1,~\alpha+\ell \\
F_i&~~~~~\text{for}\ \alpha\leq i\leq\alpha+\ell-2
\end{cases}
\end{align*}
and
$$
b_{2i-1}=-\sum_{j=0}^{\ell-i}a_j(d_{\ell-i-j+1}+a_{\ell+i+j+1}).
$$
with $d_0=1$ e $d_i=-\sum_{j=1}^ia_jd_{i-j}$.
\end{itemize}

Besides, in the cases (i) and (ii) above $C$ is trigonal computed by a base point $g_3^1$, and this holds in (iii) as well if and only if 
$$
a_{\ell+i}=0\ \ \ \ \mbox{for}~2\leq i\leq \ell.
$$
Otherwise, $C$ is tetragonal computed by a base point $g_4^1$. Also, in all cases above, the smallest $d$ for which $C$ carries a base point free $g_d^1$ is $d=\alpha$.
\end{thm}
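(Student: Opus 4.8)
The plan is to prove Theorem~\ref{general} in three stages: first establish the parametrization~(\ref{equcv0}), then compute the canonical model in each of the three cases, and finally determine the gonalities.

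\smallskip
\noindent\textbf{Step 1: The parametrization.}
I would start from the abstract local ring $\op=k\oplus k g_1(t)\oplus\cdots\oplus k g_{\alpha+\ell-2}(t)\oplus t^{\beta}\obp$ given by~(\ref{equsum}), where $g_i$ has valuation $\alpha+i-1$ at $\pb$ (so $v_{\pb}(g_i)$ runs over the $\ell$ consecutive nonzero small values $\alpha,\ldots,\alpha+\ell-1$, then we need one section of valuation $\alpha+\ell+m$ to generate the block after the last gaps). Following the balancing idea sketched before Example~\ref{exeexe} (and the explicit toy computation therein), I would transfer coefficients from the higher components into the first component by writing $1/F_0$ as a power series: replacing the generator $1$ by $F_0=1+a_1t+\cdots+a_{\ell+m-1}t^{\ell+m-1}$ and multiplying through, one can clear all but $\ell-1$ of the tail coefficients in each $F_i$ and arrive at the stated normal form, the dictionary between the $a_k$ and the old coefficients being invertible. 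The extra pure powers $t^{\rm v}$ and $t^{\alpha+\ell-1}$ are then appended: $t^{\alpha+\ell-1}$ and the $t^{v_j}$ for $v_j\ge\beta$ are forced to make the semigroup exactly $\sss^*$ and to avoid a singularity at $Q$; the exact list of $v_j$ and the value of $N$ split according to $\ell\le m$ or $\ell>m$ because that is precisely the condition deciding whether the products $F_iF_j$ already supply the values $2\alpha+2\ell-1,\ldots,\beta+\alpha-1$ or whether they must be added by hand. This bookkeeping — making sure the list of valuations appearing among $\{F_0,F_i,t^{\rm v},\ldots\}$ and all their products generates $\sss^*$ and nothing smaller — is the first place where care is needed.

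\smallskip
\noindent\textbf{Step 2: The canonical model.}
Here I invoke Theorem~\ref{thmggr}.(iv): $C'=(f_0:\cdots:f_{g-1})$ with $\{v_{\pb}(f_i)\}=\kk^{\circ}$ and $f_{g-1}=t^{\beta-2}$. So the task reduces to (a) computing $\kk^{\circ}$ from $\sss^*$, and (b) extracting from the local ring~(\ref{equsum}) a $k$-basis of $H^0(\ww)=\ww_P/\lambda$ realized as rational functions with exactly those valuations. For the semigroup $\sss^*$ one computes $\kk=\kk_P=\{a\mid\gamma-a\notin\sss^*\}$ explicitly — it is again essentially one block — and reads off $\kk^{\circ}$. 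Then, using the embedding $\ww\hookrightarrow\mathcal{K}$ of Remark~\ref{remrem} with $\cp\subset\op\subset\ww_P$, the sections of $\ww$ with small valuation are built out of $1$, the $F_i/F_0$ and suitable products, divided by the appropriate power; the three cases $m=1$, $\ell=1$, $m=2$ differ only in how many ``correction'' terms $h_i$ survive, and the closed formulas for $h_i$ and $b_{2i-1}$ (with the auxiliary $d_i$ defined by $d_0=1$, $d_i=-\sum_{j=1}^i a_j d_{i-j}$, i.e.\ the coefficients of $1/F_0$) come out of expanding $1/F_0$ and collecting. I would present $m=1$ first (cleanest: the canonical model is literally $(F_0:t^{\ell+1}:\cdots:t^{\alpha+\ell-1})$ since no tail survives), then $\ell=1$, then $m=2$, each time just verifying the valuations match $\kk^{\circ}$ and the top coordinate is $t^{\beta-2}$.

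\smallskip
\noindent\textbf{Step 3: Gonality.}
For the base-point gonality I argue as in Example~\ref{exeexe}: a sheaf $\aaa=\oo_C\langle 1,t^rf/h\rangle$ as in Theorem~\ref{thmggr}.(i) that is not locally free has $\deg_P(\aaa)\ge 1$, and using~(\ref{desi1}) together with the description $D=v_{\pb}(\aaa_P)\setminus\sss^*$ one pins down $r$, $\deg f$, $\deg h$ to a short list; in cases (i) and (ii) ($m=1$ or $\ell=1$) the relevant obstruction value lands outside $D$ automatically, giving a base-point $g_3^1$, while in case (iii) ($m=2$) the same computation as in the example shows the tail coefficients $a_{\ell+i}$ ($2\le i\le\ell$) are exactly what must vanish for the degree to stay $3$; otherwise $\deg(\aaa)=4$ and one exhibits an explicit base-point $g_4^1$. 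For $\gon_F$: Theorem~\ref{thmggr}.(iii) gives $\gon_F(C)\ge\alpha$, and for the matching upper bound I would produce an invertible $\aaa$ of degree $\alpha$ by the same device used at the end of Example~\ref{exeexe} — exhibit $f(t)=t^{\alpha}/\widetilde F_0(t)$ (for a suitably perturbed $\widetilde F_0$) that lies in $\op$, so that $\aaa=\oo_C\langle 1,f\rangle$ has $\aaa_P=\op$ and degree $\alpha$ off $P$ by~(\ref{desi1}). Combining, $\gon(C)=\min(\gon_B(C),\gon_F(C))$ gives the trigonal/tetragonal dichotomy.

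\smallskip
\noindent I expect Step~1 together with the valuation-matching in Step~2 — i.e.\ the purely combinatorial verification that the chosen monomials and their products realize exactly $\sss^*$ and $\kk^{\circ}$, and that the coefficient dictionary ($a_k\leftrightarrow b_{ij}$, the $d_i$) is consistent and invertible — to be the main obstacle; the gonality computation in Step~3, while technical, is a direct adaptation of Example~\ref{exeexe}.
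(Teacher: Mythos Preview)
Your three-part structure matches the paper's, and Steps~1 and~3 are essentially what the paper does (the paper treats gonality before the canonical model, but the order is immaterial). The gap is in Step~2.

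You plan to invoke Theorem~\ref{thmggr}.(iv), compute $\kk^{\circ}$, write down rational functions with those valuations built from $1$, the $F_i/F_0$ and products, and then ``just verify the valuations match $\kk^{\circ}$.'' But matching valuations does not certify that your candidate functions span $H^0(\ww)$: there are many $g$-dimensional spaces of rational functions with valuation set $\kk^{\circ}$ that are not $H^0(\ww)$, and the chain $\cp\subset\op\subset\ww_P\subset\obp$ together with $v_{\pb}(\ww_P)=\kk$ does not pin down $\ww_P$ as a subspace. The paper does not argue this way. It writes down a candidate space $V$, forms $\aaa:=\oo_C\langle V\rangle$, and proves $\deg(\aaa)=2g-2$; since $h^0(\aaa)\ge\dim V=g$, the characterization of $\ww$ as the unique torsion-free rank-$1$ sheaf of degree $2g-2$ with $h^0\ge g$ forces $\aaa=\ww$ and $V=H^0(\ww)$.

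The degree at $P$ is $\#D$ with $D=v_{\pb}(\aaa_P)\setminus\sss$, and the inclusion $E:=\kk\setminus\sss\subset D$ is automatic; the real content is showing nothing \emph{further} lies in $D$. In case~(ii) this means $\alpha+m\notin D$, and in case~(iii) that $\alpha+\ell+1\notin D$; each requires expanding a general element of the relevant piece of $\aaa_P$ and exhibiting a cancellation. This is exactly where the coefficients $b_{2i-1}$ earn their keep: they are not obtained by ``expanding $1/F_0$ and collecting'' to match a valuation, but are forced by the requirement that the coefficient of $t^{\alpha+\ell+1}$ vanish in that expansion. Without this degree verification, Step~2 does not establish that the displayed curves are the canonical models.
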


\begin{proof} We break down the proof of the above result into three natural parts.


\medskip
\noindent {\bf Part 1: Parametrization of the curve.} We will start by describing the singularity and then pass to a global parametrization. So let $P\in C$ be the unique singular point. Plainly, its local ring is  
\begin{equation*}
\label{equrng}
\op=k\oplus k f_1(t)\oplus \cdots \oplus k f_{\ell}(t)\oplus t^{\beta} \obp 
\end{equation*}
where $\beta=\alpha+\ell+m$ is the conductor number and $f_i(t) \in k(t)=k(C)$ are rational functions such that $\vpb(f_i)=\alpha+i-1$. This follows trivially from the fact that $\dim(\op/\cp)=\#(\sss\setminus(\beta+\mathbb{N}))$. On the other hand, we may consider the power series of each $f_i$ and just disregard its part on the conductor ideal. After standard normalization we get
\begin{align}
\label{equsi1}
f_1(t)&=t^\alpha+b_{1,1}t^{\alpha+\ell}+\cdots+b_{1,m}t^{\beta-1}\nonumber\\
\vdots\ \ \  &\ \ \ \ \ \ \ \ \ \ \ \ \  \ \ \ \ \ \ \ \ \ \ \vdots  \\
f_{\ell}(t)&=t^{\alpha+\ell-1}+b_{\ell,1}t^{\alpha+\ell}+\cdots+b_{\ell,m}t^{\beta-1} \nonumber .
\end{align}
So the number of free coefficients needed to describe the singularity matches the weight of the semigroup $w(\sss)=\sum_{i=1}^g \ell_i$, where the $\ell_i$ are the gaps. Here this number is $n:=\ell m$. 

\medskip
To get a global parametrization for $C$, we claim that any curve in the statement of the theorem is isomorphic to one like 
\begin{equation}
\label{equcv1}
C=(1:f_1:\ldots:f_{\ell}:t^{\rm v})
\end{equation}
where ${\rm v}$ is as in the statement of the theorem. Indeed, we must assure that the values of the parametrizing functions generate $\sss$. We have that the $f_i$ take care of $\sss\cap[0,\beta-1]$ but it remains to show that the set $[\beta,\beta+\alpha-1]$ is also reached by values of the parametrizing functions (for simplicity, we will omit intersection with $\nn$ in the notation of intervals). Now note that $2\alpha>\beta$. In fact, if $r$ is the largest integer such that $r\alpha <\beta$, then, clearly, $r\alpha\geq s':=\lfloor \beta/2 \rfloor$. But, by construction, $[\alpha,k\alpha]\subset\sss$. So if $r\geq 2$ then $s'\in\sss$ and so does $\beta-1=2s'$, which is a contradiction. Now $[2\alpha,2\alpha+2\ell-2]$ is reached by products $f_if_j$. So our choice of ${\rm v}$ is made to reach the remaining elements of $[\beta,\beta+\alpha-1]$. If $2\alpha+2\ell-2\geq \beta+\alpha-1$, that is, if $\ell>m$ (since $\beta=\alpha+\ell+m$) it suffices to take ${\rm v}=(\beta,\ldots, 2\alpha-1)$, otherwise we add the part $(2\alpha+2\ell-1, \ldots,\beta+\alpha-1)$ to ${\rm v}$.

\medskip
Now we make further modifications to the parametric functions $f_i$, replacing them with the ones in (\ref{equcv0}) in the theorem statement. The reasons why were explained in the digressions we made right before Theorem \ref{general}. So what we need to prove is that any choice of the $a_{k}$ as in (\ref{equcv0}) yields a curve like (\ref{equcv1}) and, conversely, the $b_{i,j}$ in (\ref{equcv1}) determine a curve like (\ref{equcv0}).

\medskip
First off, just note that the parameter space dimensions agree. In fact,  in (\ref{equcv0}) we have $(\ell+m-1)+(\ell-1)(m-1)=\ell m$ free coefficients like in (\ref{equsi1}) as desired. So set
\begin{equation}
\label{equsi2}
f_i^*:=\frac{F_{\alpha+i-1}}{F_0}=\frac{t^{\alpha+i-1}+\sum_{j=1}^{m-1}a_{\ell+i(m-1)+j}t^{\alpha+\ell+j-1}}{1+a_1t+\cdots+a_{\ell+m-1}t^{\ell+m-1}}\ \ \text{for}\ 1\leq i\leq \ell-1 
\end{equation}
and $f_{\ell}^*:=t^{\alpha+\ell-1}/F_0$. Write $1/F_0=\sum_{k=0}^{\infty}d_kt^k$ with $d_0=1$ and $d_j=-\sum_{k=1}^ja_kd_{j-k}$. Replacing it in (\ref{equsi2}) and develoving we get
\begin{align}
\label{equscp}
f_i^*&=\sum_{k=0}^{\ell-i}d_kt^{\alpha+i+k-1}+\sum_{r=0}^{m-2}\bigg(d_{\ell-i+r+1}+\sum_{k=0}^{r}d_ka_{\ell+i(m-1)+r-k+1}\bigg)t^{\alpha+\ell+r} \\
&\ \ \ \ \ \ \ \ \ \ \ \ \ \ \ \ \ \ \ \ \ \ \ \ \ +\bigg(d_{\ell+m-i}+\sum_{k=1}^{m-1}d_ka_{\ell+i(m-1)+m-k}\bigg)t^{\beta-1}+t^{\beta}h_i \nonumber
\end{align}
and $f_{\ell}^*=t^{\alpha+\ell-1}+\sum_{k=1}^md_kt^{\alpha+\ell+k-1}+t^{\beta}h_{\ell}$.

\medskip
In order to eliminate the powers which do not appear at (\ref{equsi1}), we now set
$$
\bar{f_{i}}:=f_i^*+\sum_{s=1}^{\ell-i}a_sf^*_{i+s}
$$
which yields
\begin{align*}
\bar{f_{i}}&=f_i^*+ \sum_{s=1}^{\ell-i}\sum_{k=0}^{\ell-i-s}a_sd_kt^{\alpha+i+s+k-1}\\ 
&\ \ \ \ \ \ \ \ +\sum_{r=0}^{m-2}\bigg(\sum_{s=1}^{\ell-i}a_s\bigg(d_{\ell-(i+s)+r+1}+\sum_{k=0}^{r}d_ka_{\ell+(i+s)(m-1)+r-k+1}\bigg)\bigg)t^{\alpha+\ell+r}\\ 
&\ \ \ \ \ \ \ \ \ \ +\bigg(\sum_{s=1}^{\ell-i}a_s\bigg(d_{\ell+m-(i+s)}+\sum_{k=1}^{m-1}d_ka_{\ell+(i+s)(m-1)+m-k}\bigg)\bigg)t^{\beta-1}+t^{\beta}g_i
\end{align*}
where $g_i\in\obp$. Now note that
$$
\sum_{k=0}^{\ell-i}d_kt^{\alpha+i+k-1}+ \sum_{s=1}^{\ell-i} \sum_{k=0}^{\ell-i-s}a_sd_kt^{\alpha+i+s+k-1}=t^{\alpha+i-1}
$$
which yields
\begin{align}
\label{equfbr}
\bar{f}_i&=t^{\alpha+i-1}+\sum_{r=0}^{m-2}\bigg(\sum_{s=0}^{\ell-i}a_s\bigg(d_{\ell-(i+s)+r+1}+\sum_{k=0}^{r}d_ka_{\ell+(i+s)(m-1)+r-k+1}\bigg)\bigg)t^{\alpha+\ell+r}\\
& \ \ \ \ \ \ \ \ \ \ \ \ \ +\bigg(\sum_{s=0}^{\ell-i}a_s\bigg(d_{\ell+m-(i+s)}+\sum_{k=1}^{m-1}d_ka_{\ell+(i+s)(m-1)+m-k}\bigg)\bigg)t^{\beta-1}+t^{\beta}h_i \nonumber
\end{align}
where $h_i\in\obp$. Thus, starting from the $f_i^*$, induced by (\ref{equcv0}), we just got the $\bar{f}_i\in\oo_P$ above, which are now in the same form of the $f_i$. Moreover, comparing (\ref{equsi1}) and (\ref{equfbr}), the $b_{i,j}$ are written in terms of the $a_k$ as follows
\begin{align*}
b_{i,j}&=\sum_{s=0}^{\ell-i}a_s\bigg(d_{\ell-i+j-s}+\sum_{k=0}^{j-1}d_ka_{\ell+(i+s)(m-1)+j-k}\bigg) ~~~~\text{for}\ 1\leq i\leq \ell-1, ~ 1\leq j\leq m-1\\
b_{i,m}&=\sum_{s=0}^{\ell-i}a_s\bigg(d_{\ell+m-i-s}+\sum_{k=1}^{m-1}d_ka_{\ell+(i+s)(m-1)+m-k}\bigg) ~~~~\text{for}\ 1\leq i\leq\ \ell-1 \\
b_{\ell,j}&~=d_j\ \ \ \ ~~~\text{for}\ 1\leq j\leq m.
\end{align*}

Conversely, to get the $a_k$ from the $b_{i,j}$, proceeding by induction on $j$ in the equations involving $b_{\ell,j}$ we get $a_j=-\sum_{k=1}^{j-1}a_kd_{j-k}-b_{\ell,j}$, for $1\leq j\leq m$. Similarly, but with a little more effort, for any fixed $i$, one may use induction on $j$ on the pair of equalities involving $b_{i,j}$ to get, for $1\leq j\leq m$,
\begin{align*}
a_{\ell+i(m-1)+j}&=b_{i,j}-d_{\ell-i+j}-\sum_{k=1}^{j-1}d_ka_{\ell+i(m-1)+j-k}\\
& \ \ \ \ \ \ \ \ \ \ \ \  -\sum_{s=1}^{\ell-i}a_s\bigg(d_{\ell-i+j-s}-\sum_{k=0}^{j-1}d_ka_{\ell+(i+s)(m-1)+j-k}\bigg)
\end{align*}
\begin{align*}
a_{\ell+m-i}&=-b_{i,m}-\sum_{k=1}^{\ell+m-i-1}a_kd_{\ell+m-i-k}+\sum_{k=1}^{m-1}d_ka_{\ell+i(m-1)+m-k} \\
& \ \ \ \ \ \ \ \ \ \ \ \  +\sum_{s=1}^{\ell-i}a_s\bigg(d_{\ell+m-i-s}+\sum_{k=1}^{m-1}d_ka_{\ell+(i+s)(m-1)+m-k} \bigg). 
\end{align*}
and we are done with the description of the curve.

\medskip
\noindent {\bf Part 2: Gonality.} Now we prove the last statements of the theorem concerning gonality, as such analysis, though related to, does not depend on the parametrization of the canonical model.

\medskip
\noindent {\bf (a) non-removable base point pencils.} We start by supposing the gonality is computed by a linear system that admits a non-removable base point, or equivalently, assuming that the sheaf $\aaa$ that computes the gonality is not locally free. 

\medskip
We begin with case (iii), where $m=2$. The analysis is essentially contained in Example \ref{exeexe}. Recall that $C$ cannot have gonality $2$, owing to \cite[Thm.~2.1]{Mt} since $\mmp\neq \cp$. So suppose $\deg(\aaa)=3$. Thus $\deg_P(\aaa)\geq 1$, as otherwise $\aaa_P=\oo_P$ and $\aaa$ would be a bundle. Then $\deg_{C\setminus P}(\aaa)\leq 2$.
Thus $r+\deg(f)\leq 2$ owing to (\ref{desi1}), and hence $r\leq 2$. If $r=2$ note that we have $\{2,\beta-1\}\in D$ since $m=2$, thus $\deg_P(\aaa)\geq 2$.  Therefore $\deg_{C\setminus P}(\aaa)\leq 1$ and all possibilities are precluded by (\ref{desi1}). 

\medskip
If $r=1$ then $\deg_P(\aaa)\geq 2$, so $\deg_{C\setminus P}(\aaa)\leq 1$. From (\ref{desi1}) we get $\deg(f)=0$, $\deg(h)\leq 1$ and $\deg_{C\setminus P}\mathcal{F}=1$. Therefore $\deg_P(\aaa)=2$. Now $\{1,\beta-2\}\subset D$ if $r=1$ and equality holds if and only if $\beta-1=\alpha+\ell+1\not\in D$. So we have to analyze when this property holds. 

\medskip
In order to check so, disregarding the functions with values greater than $\beta$, we have have that (\ref{equsum}) in the present case reduces to the direct direct sum
$$
\oo_P=k\oplus k \frac{F_{\alpha}(t)}{F_0(t)}\oplus \cdots \oplus k \frac{F_{\alpha+\ell-2}(t)}{F_0(t)}\oplus k \frac{t^{\alpha+\ell-1}}{F_0(t)}\oplus t^{\beta} \obp. 
$$
Write $\aaa=\oo_C\langle 1,t/(at+b)$ with $b\neq0$. If so, a general element in $\aaa_P$ is of the form
$$
f=\sum_{i\in S^{\circ}}\bigg(c_i+d_i\frac{t}{at+b}\bigg)\bigg(\frac{F_i}{F_0}\bigg)+t^{\beta} u
$$
where $u \in\obp$, $c_i,d_i\in\mathbb{C}$, and $\sss^{\circ}:=\{s\in\sss\,|\, s<\beta\}$. Setting $p(t):=1/(at+b)f_0$ and recalling that $m=2$ we may write
\begin{align*}
f=&\, p(t)\bigg[\,c_0b+\sum_{i=1}^{\ell+1}\bigg(c_0(aa_{i-1}+ba_{i})+a_{i-1}c_1\bigg)t^i\\
&\ \ \ \ \ \ \ \ +a_{\ell+1}(ac_0+c_1)t^{\ell+2}+bc_{\alpha}t^\alpha+\sum_{i=\alpha+1}^{\alpha+\ell-1}\bigg(d_{i-1}+bc_i+ac_{i-1}\bigg)t^i\\
&\ \ \ \ \ \ \ \ +\bigg(d_{\alpha+\ell-1}+ac_{\alpha+\ell-1}+\sum_{i=\alpha}^{\alpha+\ell-2}\bigg(ba_{\ell+i-\alpha+2}c_i\bigg)\bigg)t^{\alpha+\ell}\\
&\ \ \ \ \ \ \ \ +\bigg(\sum_{i=\alpha}^{\alpha+\ell-2}a_{\ell+i-\alpha+2}(d_{i}+ac_i)\bigg)t^{\alpha+\ell+1}\bigg] +t^{\beta} q(t)
\end{align*}
Now assume $\vpb(f)=\alpha+\ell+1$. Since $\vpb(p)=0$, we should kill all coefficients above but the last one. From the vanishing of all coefficients until $t^{\alpha+\ell-1}$ we get $c_0=c_1=c_\alpha=0$,  $d_{\alpha}=-bc_{\alpha+1}$ and
\begin{equation}
\label{equca11}
d_{\alpha+i-1}=-bc_{\alpha+i}-ac_{\alpha+i-1}\ \ \ \ \  \text{for}\ 2\leq i\leq \ell-1.
\end{equation}
Killing $t^{\alpha+\ell}$ yields
\begin{equation}
\label{equca12}
d_{\alpha+\ell-1}=-ac_{\alpha+\ell-1}-\sum_{i=\alpha}^{\alpha+\ell-2}ba_{\ell+i-\alpha+2}c_i.
\end{equation}
So now 
$$
f=\bigg(\sum_{i=\alpha}^{\alpha+\ell-2}a_{\ell+i-\alpha+2}(d_{i}+ac_i)\bigg)t^{\alpha+\ell+1}+\cdots = \bigg(-\sum_{i=\alpha+1}^{\alpha+\ell-1}ba_{\ell+i-\alpha+1}c_i\bigg)t^{\alpha+\ell+1}+\cdots
$$
where the second equality comes from (\ref{equca11}) and (\ref{equca12}). As $b\neq0$ and the $c_i$ are arbitrary, we see that $\alpha+\ell+1\not\in D$ if and only if
\begin{equation}
\label{equtrg}
a_{\ell+i-\alpha+1}=0\ \ \ \ \mbox{for}~\alpha+1\leq i\leq \alpha+\ell-1.
\end{equation}
So just in this case $\aaa$ has degree $3$ and $C$ trigonal computed by a base point $g_3^1$. Othewise $C$ is tetragonal computed by the same sheaf, unless $\alpha=3$, as we will see right below, since in this case $C$ carries a base point free $g_3^1$.

\medskip
Now we address the gonality in cases (i) and  (ii). Form the sheaf $\aaa:=\oo_C \langle 1, t\rangle$.  Clearly, $\{1,\alpha+\ell\}\subset D$.  We claim the inclusion is an equality in both cases. Indeed, this is immediate if $m=1$. On the other hand, if $\ell=1$, write a general element of $\aaa_P$ as
$$
f=c_0+c_1t+c_{2}\frac{f_{\alpha}}{f_0}+c_{3}\frac{tf_{\alpha}}{f_0}+t^{\beta}h
$$
where $h\in\obp$. If we also write $f_{\alpha}/f_0 = t^{\alpha}+dt^{\alpha+1}+\cdots$, we get
\begin{align*}
f&=c_0+c_1t+c_2 t^\alpha+(c_2d+c_3)t^{\alpha+1}+\cdots
\end{align*}
So if $\vpb(f)\geq \alpha+2$, then $c_0=c_1=c_2=c_3=0$, and hence $\vpb(f)\geq\beta$. Thus $D=\{1,\alpha+1\}$ as desired. Therefore, in both cases, $\deg_P(\aaa)=\#D=2$. Now $\mathcal{F}$ has degree 1 at $Q$ and 0 at the open set $U:=C\setminus\{P,Q\}$, so its total degree is $3$. Thus $C$ is trigonal computed by a base point $g_3^1$ if conditions of (i) or (ii) apply.

\medskip
\noindent {\bf (b) base point free pencils.} We now study the gonality concerning just base point free pencils, or equivalently, assuming the linear system is induced by a locally free sheaf. We claim that any $C$ as stated in the theorem, always carries a base point free $g_{\alpha}^1$. In fact, set
$$
f(t):=\frac{t^{\alpha}}{h(t)}:=\frac{t^{\alpha}}{b_0+b_1t+\cdots+b_{\ell+m-1}t^{\ell+m-1}}
$$
with
\begin{align*}
b_i &=\begin{cases}
1&~~~~~\text{for}\ i=0\\
a_i&~~~~~\text{for}\ 1\leq i\leq \ell-1\\
a_i-\sum_{j=\ell}^ia_{m+j}b_{i-j}&~~~~~\text{for}\ \ell\leq i\leq \ell+m-2\\
a_i-\sum_{j=1}^{m-1}a_{j+\ell+m-1}b_{m-j}&~~~~~\text{for}\ i=\ell+m-1.
\end{cases}
\end{align*}
One can check that
$$
f(t)=\frac{t^{\alpha}+\sum_{j=1}^{m-1}a_{j+\ell+m-1}t^{\alpha+j+\ell-1}}{1+a_1t+\cdots+a_{\ell+m-1}t^{\ell+m-1}}+t^{\beta}q(t)=\frac{F_{\alpha}}{F_0} +t^{\beta}q(t) \in \oo_P
$$
where $q(t)\in\obp$. Now set $\aaa:=\oo_C\langle 1,f(t)\rangle$. Note that $\deg_P(\aaa)=0$ as $f(t)\in \oo_P$.
On the other hand, $\ell+m-1\leq \alpha$, otherwise one finds $s$ and $\beta-s-1$ both in $\sss$ which never happens. Hence $\deg(h(t))\leq \alpha$. Using (\ref{desi1}) we get
$$
\deg_{C\setminus P}=
\deg(h(t))+{\rm max}\{0,\alpha -\deg(h(t))\}=\alpha
$$
By construction, $\aaa$ is locally free, thus induces a base point free $g_\alpha^1$ on $C$. Theorem \ref{thmggr}.(iii) then assures $\gon_F(C)=\alpha.$

\medskip

\medskip
\noindent {\bf Part 3: Canonical Model.} Now we compute the canonical model of $C$.
First note that $\ww$ is characterized by being a torsion free sheaf of rank $1$ and degree $2g-2$ with at least $g$ independent global sections. Indeed, if $\aaa$ is any such a sheaf, then $\chi(\aaa)=g-1$ since $\deg(\aaa)=2g-2$, and hence $h^1(\aaa)\geq 1$ since $h^0(\aaa)\geq g$. But the nonvanishing of $H^1(\aaa)$ yields an inclusion $\aaa\hookrightarrow\ww$ and these sheaves must coincide as they are of same degree. 

\medskip
To address case (i), and based on Theorem \ref{thmggr}.(iv), if $m=1$ we get
$$
{\rm K}^{\circ}=\{0,\ell+1,\ldots,\alpha+\ell-1\}.
$$ 
So consider the vector space
$$
V:=\bigg\langle 1, \frac{t^{\ell+1}}{h_0},\ldots,\frac{t^{\ell+\alpha-1}}{h_0}\bigg\rangle \subset k(t)
$$
where
$$
h_0  =1+a_1t+\cdots+a_{\ell}t^{\ell}.
$$
and take $\aaa:=\oo_C\langle V\rangle$. If we show that $\deg(\aaa)=2g-2$ we are done. Indeed,  $h^0(\aaa)\geq\dim(V)=\#\kk^{\circ}=g$, so if $\deg(\aaa)=2g-2$ then $\aaa=\ww$ and $V=H^0(\ww)$.

\medskip
Since $\ell <\alpha$, we have $\deg_Q(\aaa)=\alpha+\ell-1-\deg(h_0)$. On the other hand, $\deg_U(\aaa)=\deg(h_0)$ where $U:=C\setminus\{P,Q\}$. Therefore, $\deg_{C\setminus\{P\}}=\alpha+\ell-1$. Finally, we compute the degree at $P$. For simplicity, given $T\subset\nn$ set $T':=T\cap(0,\beta)$. Write 
$$
\op=k\oplus \bigg(\bigoplus _{i\in \sss'} k\frac{t^i}{h_0}\bigg)\oplus \cp
$$
Since $\aaa_P=\oo_P\langle V\rangle$ and $i+j\geq\beta$ for every $i\in\sss'$ and $j\in \kk'$ we get
$$
\aaa_P=k\oplus\bigg(\bigoplus_{i\in \kk'} k\,\frac{t^i}{h_0}\bigg)\oplus\cp
$$
Therefore $\deg_P(\aaa)=\#(\kk\setminus\sss)=\alpha-\ell-1$. Summing up we get 
$$
\deg(\aaa)=(\alpha-\ell-1)+(\alpha+\ell-1)=2\alpha-2
$$
Now $g$ equals the number of gaps of $\sss$, which is $\alpha$. Hence $\deg(\aaa)=2g-2$ as claimed, and $V=H^0(\ww)$. We then get $C'$ as stated in (i).

\medskip
For the case (ii), if $\ell=1$ we get
$$
{\rm K}^{\circ}=\{0,1,\ldots,m-1,m+1,\ldots,\alpha+m-1\}.
$$ 
So consider the functions
$$
h_i = 
\begin{cases}
\sum_{j=0}^{m-i}a_jt^{i+j} &\text{for}\ 0\leq i\leq m-1 \\
t^i &\text{for}\ m+1\leq i\leq \alpha+m-1
\end{cases}
$$
and the sheaf
$$
\aaa:=\oo_C\bigg\langle 1,\frac{h_{1}}{h_0},\ldots,\frac{h_{m-1}}{h_0},\frac{t^{m+1}}{h_0},\ldots,\frac{t^{\alpha+m-1}}{h_0}\bigg\rangle.
$$
As before, we will show that $\deg(\aaa)=2g-2$. By similar arguments to the prior case we get that $\deg_{C\setminus\{ P\}}(\aaa)=\alpha+m-1$. To compute the degree at $P$ write
\begin{equation}
\label{equm2}
\aaa_P
=\bigg(\bigoplus_{i\in \kk\cap[0,\alpha]} k\frac{h_i}{h_0}\bigg)\oplus\bigg(\sum_{i=\alpha+1}^{\alpha+m-1} k\frac{h_i}{h_0}+\sum_{i=1}^{m-1} k\frac{h_ih_\alpha}{h_0^2}+\cp\bigg)
\end{equation}
Now recall that if we set $D:=\vpb(\aaa_P)\setminus\sss$ then local degree is $\deg_P(\aaa)=\#D$. Set $E:=\kk\setminus\sss=\{1,\ldots,\alpha+m-1\}\setminus\{m,\alpha\}$. The decomposition above yields
$$
E\subset D \subset E\cup\{\alpha+m\}.
$$
We claim that $\alpha+m\not\in D$. Indeed, write an element of the second summand above with order smaller than $\beta$ as
\begin{equation}
\label{equm2}
f=\sum_{i=\alpha+1}^{\alpha+m-1} c_i\frac{t^i}{h_0}+\sum_{i=1}^{m-1} b_i\frac{h_ih_\alpha}{h_0^2}
\end{equation}
and also $1/h_0^2=\sum_{j=0}^{\infty}p_jt^j$ with $p_0=1$ and $p_j=d_j-\sum_{k=1}^ja_kp_{j-k}$. We get
\begin{align*}
f&=\sum_{i=\alpha+1}^{\alpha+m-1}\bigg(\sum_{j=0}^{i-\alpha-1}\bigg(d_jc_{i-j}
    +\sum_{k=0}^ja_kp_{j-k}b_{i-j-\alpha}\bigg)\bigg)t^i\\
    &\ \ \ \ \ \ \ +\bigg(\sum_{i=1}^{m-1}\bigg(d_ic_{\alpha+m-i}+\sum_{j=0}^ia_jp_{i-j}b_{m-i}\bigg)\bigg)t^{\alpha+m}+t^{\beta}h
\end{align*}
with  $h\in\obp$. Killing the coefficients of $t^{\alpha+i}$ we see that
\begin{equation}
\label{eqm3}
c_{\alpha+i}=-b_i\ \ \ \ \  \text{for}\ 1\leq i\leq m-1.
\end{equation}
Using (\ref{eqm3}) we have that the coefficient of $t^{\alpha+m}$ vanishes as well. This implies that $\alpha+m\not\in D$. Then, $E=D$ and so $\deg_P(\aaa)=\alpha+m-3$ and hence 
$$
\deg(\aaa)=(\alpha+m-3)+(\alpha+m-1)=2(\alpha+m-1)-2.
$$
Now $g$, the number of gaps of $\sss$, is $\alpha+m-1$. Hence $\deg(\aaa)=2g-2$ and $C'$ is as stated in (ii).

\medskip
And for the case (iii), if $m=2$ we get 
$$
{\rm K}^{\circ}=\{ 0,1,\ell+2,\ldots,\alpha+\ell\}.
$$ 
So take the functions
\begin{align*}
h_0 & =1+a_1t+\cdots+a_{\ell+1}t^{\ell+1} \\
h_1 &=t+b_{n-1}t^2+b_{n-3}t^3+\cdots+b_1t^{\ell+1}\\
h_i &=\begin{cases}
t^i& ~~~~~\text{for}\ \ell+2\leq i\leq\alpha-1~\mbox{and}~i=\alpha+\ell-1,~\alpha+\ell \\
F_i&~~~~~\text{for}\ \alpha\leq i\leq\alpha+\ell-2
\end{cases}
\end{align*}
where 
$$
b_{2i-1}=-\sum_{j=0}^{\ell-i}a_j\bigg(d_{\ell-i-j+1}+a_{\ell+i+j+1}\bigg)
$$ 
and consider the sheaf
$$
\aaa:=\oo_C\bigg\langle 1,\frac{h_{1}}{h_0},\frac{t^{\ell+2}}{h_0},\ldots,\frac{t^{\alpha-1}}{h_0},\frac{F_{\alpha}}{h_0},\ldots,\frac{F_{\alpha+\ell-2}}{h_0},\frac{t^{\alpha+\ell-1}}{h_0},\frac{t^{\alpha+\ell}}{h_0}\bigg\rangle.
$$
Again, we will show that $\deg(\aaa)=2g-2$. We have that $\deg_{C\setminus\{ P\}}(\aaa)=\alpha+\ell$ and we claim that $\deg_P(\aaa)=\alpha-\ell$. To compute the degree at $P$ write
\begin{equation}
\label{equm22}
\aaa_P
=\bigg(\bigoplus_{i\in \kk\cap[0,\alpha]} k\frac{h_i}{h_0}\bigg)\oplus\bigg(\sum_{i=\alpha+1}^{\alpha+\ell} k\frac{h_i}{h_0}+\sum_{i=\alpha}^{\alpha+\ell-1}k\frac{h_1h_i}{h_0^2}\bigg)\oplus\cp
\end{equation}
Here $E=\{1\}\cup\{\ell+2,\ldots,\alpha-1\}\cup\{\alpha+\ell\}$. The decomposition (\ref{equm22}) yields
$$
E\subset D \subset E\cup\{\alpha+\ell+1\}.
$$

We claim that $\alpha+\ell+1 \not\in D$. Indeed, write an element of the second summand above with order smaller than $\beta$ as
\begin{equation}
\label{equm2}
f=\sum_{i=\alpha+1}^{\alpha+\ell} c_i\frac{h_i}{h_0}+\sum_{i=\alpha}^{\alpha+\ell-1} b_i\frac{h_1h_i}{h_0^2}
\end{equation}
We get
\begin{align*}
f&=\sum_{i=\alpha+1}^{\alpha+\ell-1}\bigg(\sum_{j=0}^{i-\alpha-1}\bigg(d_jc_{i-j}+B_jb_{i-j-1}\bigg)\bigg)t^i
+\sum_{j=0}^{\ell-1}\bigg(B_jb_{\alpha+\ell-j-1}+(d_j+a_{2(\ell+1)-j})c_{\alpha+\ell-j}\bigg)t^{\alpha+\ell}\\
& \ \ \ \ \ \ \ +\bigg(\sum_{j=0}^{\ell-1}\bigg(B_{j+1}+a_{2\ell-j+1}\bigg)b_{\alpha+\ell-j-1}+\bigg(d_{j+1}+d_1a_{2(\ell+1)-j}\bigg)c_{\alpha+\ell-j}\bigg)t^{\alpha+\ell+1}+t^{\beta}h
\end{align*}
with $h\in\obp$, $B_i=\sum_{j=0}^ip_jb_{2(\ell-i+j)+1}~\mbox{e}~b_{2\ell+1}=1.$ 

\medskip
Now note that for $f$ to have order $\alpha+\ell+1$ we should have first
\begin{equation}
\label{equca2}
c_{\alpha+1}=-b_{\alpha}.
\end{equation}
Keeping up with this procedure, we have
\begin{equation}
\label{equca3}
c_{\alpha+i}=-b_{\alpha+i-1}+\sum_{j=1}^{i-2}a_{2\ell-j+1}b_{\alpha+i-j-2}\ \ \ \ \  \text{for}\ 2\leq i\leq \ell-1.
\end{equation}
Assuming the vanishing of the coefficient of $t^{\alpha+\ell}$ we get
\begin{align*}
c_{\alpha+\ell}&=-b_{\alpha+\ell-1}-\sum_{j=1}^{\ell-1}\bigg(B_jb_{\alpha+\ell-j-1}+(d_j+a_{2(\ell+1)-j})c_{\alpha+\ell-j}\bigg).
\end{align*}
From (\ref{equca2}) and (\ref{equca3}) we have
$$
\sum_{j=1}^{\ell-1}\bigg(B_jb_{\alpha+\ell-j-1}+d_jc_{\alpha+\ell-j}\bigg)=-\sum_{j=2}^{\ell-1}\bigg(a_{2(\ell+1)-j}b_{\alpha+\ell-j-1}\bigg)
$$
which yields
\begin{equation}
\label{equca4}
c_{\alpha+\ell}=-b_{\alpha+\ell-1}+\sum_{j=2}^{\ell-1}\bigg(a_{2(\ell+1)-j}b_{\alpha+\ell-j-1}-a_{2(\ell+1)-j}c_{\alpha+\ell-j}\bigg).
\end{equation}
And now assuming the vanishing of the coefficient of $t^{\alpha+\ell+1}$ we have
\begin{align*}
d_1c_{\alpha+\ell}&=-B_1b_{\alpha+\ell-1}-\sum_{j=1}^{\ell-1}\bigg(\bigg(B_{j+1}+a_{2\ell-j+1}\bigg)b_{\alpha+\ell-j-1}+\bigg(d_{j+1}+d_1a_{2(\ell+1)-j}\bigg)c_{\alpha+\ell-j}\bigg).
\end{align*}
Note that
$$
\sum_{j=1}^{\ell-1}\bigg(B_{j+1}b_{\alpha+\ell-j-1}+d_{j+1}c_{\alpha+\ell-j}\bigg)=-\sum_{j=1}^{\ell-1}\bigg(a_{2\ell-j+1}b_{\alpha+\ell-j-1}+d_1a_{2(\ell+1)-j}b_{\alpha+\ell-j-1}\bigg)
$$
which yields
\begin{align}
\label{equca5}
d_1c_{\alpha+\ell}&=-B_1b_{\alpha+\ell-1}+\sum_{j=2}^{\ell-1}d_1\bigg(a_{2(\ell+1)-j}b_{\alpha+\ell-j-1}-a_{2(\ell+1)-j}c_{\alpha+\ell-j}\bigg).
\end{align}
Finally, from (\ref{equca4}) and (\ref{equca5}) we deduce that the coefficient of $t^{\alpha+\ell+1}$ vanishes. This implies that $\alpha+\ell+1\not\in D$. Therefore $E=D$ and hence $\deg_P(\aaa)=\alpha-\ell$. Thus
$$
\deg(\aaa)=(\alpha-\ell)+(\alpha+\ell)=2\alpha.
$$
But as the number of gaps of $\sss$ is $\alpha+1$, we get $\deg(\aaa)=2g-2$, and $C'$ as desired. Theorem \ref{general} is then proved. 
\end{proof}

\medskip
In the sequence, we state a similar result to Theorem \ref{general}, studying unicuspidal rational curves of which the singularity has two blocks of positive values smaller than the conductor number, the first one being single. The proof goes in a very close way, following the same steps based in Theorem \ref{thmggr}, and can be found in \cite{GNM}.

\pagebreak
\begin{thm}
\label{general1} Let $C$ be a rational curve with a unique singularity which is unbranched with semigroup $\sss^*=\{0,\alpha,\alpha+m+1,\ldots,\alpha+m+\ell-1,\alpha+m+\ell+1 \}$ as depicted
\begin{figure}[h]
\centering
\includegraphics[scale=0.5]{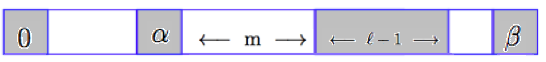}
\end{figure}

Then $C$ is isomorphic to a curve of the form: 
\begin{equation}
\label{equcv2}
C=\bigg(F_0: F_\alpha: t^{\alpha+m+1}:\cdots :t^{\beta-2}: t^{\rm v}\bigg)\subset \mathbb{P}^{N}
\end{equation}
where
$$
F_0=1+\sum_{i=1}^ua_it^i\ \ \ \ \text{and}\ \ \ \ F_{\alpha}=t^\alpha+\sum_{i=1}^va_{u+i}t^{\alpha+i}
$$
or
$$
F_0=1+\sum_{i=1}^{u-1}a_it^i+a_ut^n\ \ \ \ \text{and}\ \ \ \  F_{\alpha}=t^\alpha+\sum_{i=1}^{v-1}a_{u+i}t^{\alpha+i}+\bigg(a_v+\sum_{j=1}^{v-1}\bigg(a_jd_{v-j}-d_ja_{n-j}\bigg)\bigg)t^{\alpha+v}
$$
with
\begin{align*}
u:&=\max\{\ell,m+1\},~v:=\min\{\ell-1,m\},~n:=\ell+m, \\
{\rm v}:&=(\beta,\ldots,2\alpha+m,2\alpha+n,\ldots,2(\alpha+m)+1)\ \text{and}\ N=\alpha+m-\ell+2\ \text{if}\ \ell\leq m+1,\\
{\rm v}:&=(\beta,\ldots,2\alpha+m)\ \text{and}\  N=\alpha\ \text{if}\ \ell\geq m+2.
\end{align*}

Moreover, the canonical model of $C$ given in terms of its coefficients is:
$$
C'=\bigg(h_0:h_{\ell}:\cdots:~h_{n-1}:~h_{n+1}:~h_{n+2}: \cdots:~h_{\alpha+n-1}\bigg) \subset \mathbb{P}^{g-1}
$$ 
where
\begin{align*}
h_i &=\begin{cases}
t^i+\sum_{j=1}^{n-i}e_jt^{i+j}&~~~~~\text{for}\ i\in \{\ell,\ldots,n-1\}\setminus \{\alpha\}\\
t^i&~~~~~\text{for}\ i\in \{n+1,\ldots,\beta-2\}\setminus \{\alpha\}
\end{cases}
\end{align*}
with 
$$
e_i=a_i-\sum_{j=1}^va_{u+j}e_{i-j}, \ \ \ \ \ \ e_0=1\ \ \ \ \ \ \ \mbox{and} \ \ \ \ \ \ e_j=0~\mbox{if}~j<0
$$
and, either
$$
h_0=1+\sum_{i=1}^ua_it^i\ \ \ \ \text{and}\ \ \ \ h_{\alpha}=t^\alpha+\sum_{i=1}^va_{u+i}t^{\alpha+i}
$$
or
$$
h_0=1+\sum_{i=1}^{u-1}a_it^i+a_ut^n\ \ \ \ \text{and}\ \ \ \  h_{\alpha}=t^\alpha+\sum_{i=1}^{v-1}a_{u+i}t^{\alpha+i}+\bigg(a_v+\sum_{j=1}^{v-1}\bigg(a_jd_{v-j}-d_ja_{n-j}\bigg)\bigg)t^{\alpha+v}
$$
depending on cases which will be addressed.

\medskip
Also, $C$ is trigonal computed by a base point $g_3^1$ if and only if is of the form  $\sss^*=\{0,\alpha,\alpha+2,\alpha +4\}$. Otherwise, $C$ is tetragonal computed by a base point $g_4^1$. Moreover, the smallest $d$ for which $C$ carries a base point-free $g_d^1$ is $d=\alpha$.

\end{thm}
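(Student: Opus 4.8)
The plan is to rerun the three-part argument used for Theorem~\ref{general}, adapting the book-keeping to the new shape of $\sss^*$ (an isolated value $\alpha$, then $m$ consecutive gaps, then the $\ell-1$ consecutive values $\alpha+m+1,\dots,\alpha+m+\ell-1$, then the single gap $\alpha+m+\ell$, then the conductor $\beta=\alpha+m+\ell+1$). In Part~1 I would produce the normal form (\ref{equcv2}); in Part~2 I would determine the gonality from Theorem~\ref{thmggr}(i)--(iii) together with the degree formulas (\ref{desi1}) and (\ref{equddd}); in Part~3 I would characterise $\ww$ and then read off $C'$ via Theorem~\ref{thmggr}(iv).

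For Part~1, since $\dim(\op/\cp)=\#(\sss^*\cap[0,\beta))=\ell+1$ one has $\op=k\oplus kf_1\oplus\cdots\oplus kf_\ell\oplus t^\beta\obp$ with $\vpb(f_1)=\alpha$ and $\vpb(f_j)=\alpha+m+j-1$ for $2\le j\le\ell$, each $f_j$ being, modulo $\cp$, a polynomial carrying $w(\sss^*)$ free coefficients in all. Globalising, one adjoins powers $t^{\rm v}$ so that the valuations of the parametrizing functions and of their pairwise products generate $\sss^*$; because $2\alpha\ge\beta$ fails, the products $f_1^2$, $f_1f_j$ and those involving the $t^i$ must together cover the window $[\beta,\beta+\alpha-1]$, and the dichotomy $\ell\le m+1$ versus $\ell\ge m+2$ fixes the length of ${\rm v}$ and the value of $N$. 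One then normalises the $f_j$: using the power series and recursions in the statement (the $d_j$ and $e_j$), one replaces the $f_j$ by $F_\alpha/F_0$ and the $t^i/F_0$, writes each coefficient of the original $f_j$ as an explicit polynomial in the $a_k$, and inverts these relations; the count $\#\{a_k\}=w(\sss^*)$ then shows the two displayed forms of $(F_0,F_\alpha)$ are at once non-redundant and exhaustive.

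For Part~2, note first that $C$ is not nearly normal (as $\alpha<\beta$), so $\gon_B(C)\ge 3$ by \cite[Thm.~2.1]{Mt}. Suppose $\aaa=\oo_C\langle 1,t^rf/h\rangle$ is as in Theorem~\ref{thmggr}(i), not locally free, with $\deg(\aaa)=3$. By (\ref{desi1}) one has $\deg_{C\setminus P}(\aaa)\ge r\ge 1$, hence $\#D\le 3-r$; $r\ge 3$ would make $\aaa$ locally free, and $r=1$ would force $\{1,\alpha+1,\beta-1\}\subset D$ (the value $\beta-1$ coming from $1+(\alpha+m+\ell-1)$, with $\alpha+m+\ell-1$ in the nonempty value block), so $\#D\ge 3$, impossible. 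Thus $r=2$ and $\#D=1$; as $2\notin\sss^*$ puts $2\in D$, the equality $\#D=1$ forces $2+\alpha\in\sss^*$, which by the shape of $\sss^*$ holds only if $m=1$, and then $2+(\alpha+m+\ell-1)=\beta-1\notin\sss^*$ unless $\ell=2$; hence $\sss^*=\{0,\alpha,\alpha+2,\alpha+4\}$, and for that semigroup $\oo_C\langle 1,t^2\rangle$ indeed gives a base-point $g_3^1$. In all remaining cases, $\oo_C\langle 1,t/(at+b)\rangle$ (with $b\ne 0$) has $D=\{1,\alpha+1,\beta-1\}$ and $\deg_{C\setminus P}=1$, hence defines a base-point $g_4^1$, so $\gon_B(C)=4$. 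For the base-point-free bound, exactly as in Part~2(b) of Theorem~\ref{general} one exhibits $f(t)=t^\alpha/h(t)$ with $\deg h\le\alpha$ and $f\in\op$ congruent to $F_\alpha/F_0$ modulo $t^\beta\obp$; then $\oo_C\langle 1,f\rangle$ is locally free of degree $\alpha$ by (\ref{desi1}), and Theorem~\ref{thmggr}(iii) shows $\alpha$ is optimal, so the smallest base-point-free gonality is $\alpha$.

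For Part~3, recall from the opening of Part~3 of the proof of Theorem~\ref{general} that $\ww$ is the unique torsion-free sheaf of rank $1$ of degree $2g-2$ with at least $g$ independent global sections; it therefore suffices to build $\aaa=\oo_C\langle V\rangle$ with $\dim V=g$, generating functions of valuations $\kk^{\circ}$, and $\deg(\aaa)=2g-2$, and then invoke Theorem~\ref{thmggr}(iv). One computes $\kk^{\circ}$ for $\sss^*$, which prescribes the components $h_i$; the sheaf generated by the $h_i/h_0$ (with $h_0=F_0$ or its $a_ut^n$-variant, and $h_\alpha=F_\alpha$) has $\deg_{C\setminus P}(\aaa)=\beta-2$, while $\deg_P(\aaa)$ is obtained from $D=\vpb(\aaa_P)\setminus\sss^*$ by the ``kill the lower coefficients'' argument used in cases (ii)--(iii) of Theorem~\ref{general}: imposing that a general element of $\aaa_P$ of valuation below $\beta$ attain the one borderline value drives it into $\cp$, so that value is not in $D$; thus $\deg_P(\aaa)$ comes out correctly, $\deg(\aaa)=2g-2$, and $C'$ is as stated. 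I expect Part~1 to be the main obstacle: proving the two normal forms of $(F_0,F_\alpha)$ non-redundant and exhaustive means writing every coefficient of the $f_j$ explicitly in terms of the $a_k$ and inverting, with heavier indexing than in Theorem~\ref{general} because of the gap isolating $\alpha$ from the second block; a close second is the verification in Part~3 that the borderline value avoids $D$. Both are routine in nature and carried out in full in \cite{GNM}.
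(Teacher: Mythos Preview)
Your plan matches the paper's treatment exactly: the paper does not prove Theorem~\ref{general1} in the text but simply states that the argument reruns the three-part proof of Theorem~\ref{general} via Theorem~\ref{thmggr}, deferring all details to \cite{GNM}, which is precisely your outline.

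Two small slips in your Part~2 sketch are worth fixing when you write it out. First, $2+(\alpha+m+\ell-1)=\alpha+m+\ell+1=\beta$, not $\beta-1$; the value that lands on the last gap under $r=2$ is $\alpha+m+\ell-2$, which lies in the second block only when $\ell\ge 3$, and then $2+(\alpha+m+\ell-2)=\beta-1\in D$ forces $\#D\ge 2$. Second, in the trigonal case $\sss^*=\{0,\alpha,\alpha+2,\alpha+4\}$ the bare sheaf $\oo_C\langle 1,t^2\rangle$ need not have $\#D=1$: the combination $f_2-t^2f_1$ generically has valuation $\beta-1=\alpha+3$, so exactly as in Example~\ref{exeexe} and case~(iii) of Theorem~\ref{general} one must take $z=t^2/h$ with the linear coefficient of $h$ tuned to the curve's parameters to keep $\beta-1$ out of $D$. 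The same care (a choice of $a,b$) is needed before asserting $D=\{1,\alpha+1,\beta-1\}$ exactly for $\oo_C\langle 1,t/(at+b)\rangle$ in the tetragonal case. None of this affects the strategy, which is the paper's.
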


\section{Rational Unicuspidal Curves of Genus at most $6$}

\label{cap5}

In this section we apply Theorems \ref{general} and \ref{general1} to classify all unicuspidal rational non-Gorenstein curves of genus $g\leq 6$, by means of different gonalities and their canonical models. It is stated in the following theorem-like tableau.

\begin{thm}
\label{thmgn6}
Let $C$ be a unicuspidal rational non-Gorenstein curve of genus $g\leq 6$ with canonical model $C'$. For short, set $d_b:=\gon_B(C)$ and $d_f=\gon_F(C)$. Then we have the following possibilities for $C$ up to isomorphism.
\end{thm}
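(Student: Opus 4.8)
The strategy is a finite enumeration driven entirely by numerical semigroups. Since $C$ is unicuspidal rational non-Gorenstein of genus $g \le 6$, the singularity $P$ has a numerical semigroup $\sss$ whose number of gaps is exactly $g$ (by the genus formula $g = \overline{g} + \sum_R \delta_R$ with $\overline{g}=0$ and a single singular point), and non-Gorenstein means $\sss$ is not symmetric. So the first step is to list, for each $g \in \{3,4,5,6\}$, all non-symmetric numerical semigroups with $g$ gaps. This is a completely mechanical finite computation; the count is small enough to tabulate by hand. (Note $g \le 2$ is excluded since all semigroups with at most $2$ gaps are symmetric, i.e.\ the curve would be Gorenstein.)

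Next, for each such semigroup $\sss$ I would identify which of the structural Theorems applies. Theorem \ref{general} covers semigroups of the shape $\{0,\alpha,\alpha+1,\ldots,\alpha+\ell-1,\alpha+\ell+m,\to\}$ — one block of consecutive values after $\alpha$ — and Theorem \ref{general1} covers $\{0,\alpha,\alpha+m+1,\ldots,\alpha+m+\ell-1,\alpha+m+\ell+1,\to\}$ — a singleton value $\alpha$ followed by one further block. For genus at most $6$ I would check that every non-symmetric semigroup with $g \le 6$ gaps indeed falls into one of these two families (this is the genuinely content-bearing bookkeeping step: one must verify no ``exotic'' semigroup with three or more blocks, or with the first block of length $\ge 2$ separated from $\alpha$, slips through for $g \le 6$). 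Granting this, each semigroup comes with: a parametrization of $C$ from \eqref{equcv0} or \eqref{equcv2}; the values of $\gon_B(C)$ (trigonal with a base-point $g_3^1$ exactly under the explicit coefficient conditions $a_{\ell+i}=0$, else tetragonal) and $\gon_F(C) = \alpha$, hence $\gon(C) = \min(\alpha, d_b)$; and the parametric form of $C'$ with $\{v_{\overline P}(f_i)\} = \kk^\circ$ and top coordinate $t^{\beta-2}$, specialized via cases (i)--(iii) of the relevant theorem.

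The remaining work is presentational: assemble the data into the tableau, one row per semigroup (or per sub-case when the trigonal/tetragonal split produces two curve types), recording $\sss$, $d_b$, $d_f$, $\gon(C)$, the normal form of $C$, and $C'$. For the handful of low-genus semigroups that are degenerate limits of the families — e.g.\ $m=1$ or $\ell=1$ — I would just read off cases (i) or (ii) of Theorem \ref{general}; the four genus-$3,4$ instances displayed as cases (i)--(iv) in Section \ref{sechyp} are exactly such rows and serve as a cross-check. Where a semigroup is a boundary case between the two theorems (for instance when the ``extra'' value coincides so that the one-block and two-block descriptions overlap), I would pick whichever normal form is cleaner and note the identification.

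The main obstacle I expect is \emph{completeness of the semigroup enumeration together with the claim that Theorems \ref{general} and \ref{general1} exhaust all cases up to $g=6$}: it is easy to miss a semigroup or to overlook that some semigroup needs a parametrization not directly covered by \eqref{equcv0} or \eqref{equcv2}. I would mitigate this by organizing the list by $\alpha = $ multiplicity and by $\beta = $ conductor, using $\beta - 1 = \ell_\delta$ and $\alpha \le \beta$, and by double-checking each row's gonality against Theorem \ref{thmggr}.(iii) ($\gon_F \ge \alpha$, with equality from the explicit base-point-free $g_\alpha^1$ constructed in Part 2(b) of the proof of Theorem \ref{general}) and each $C'$ against Theorem \ref{thmggr}.(iv) (degree $\beta-2$ in $\mathbb{P}^{g-1}$, value set $\kk^\circ$). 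Everything else is routine substitution into already-proved formulas.
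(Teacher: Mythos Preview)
Your plan is correct and matches the paper's own proof exactly: the paper derives the entire tableau by invoking Theorems \ref{general} and \ref{general1} case by case, assigning rows (i), (iii), (iv), (vii)--(ix), (xiii)--(xvi) to \ref{general}.(i), rows (ii), (x), (xvii), (xxii) to \ref{general}.(ii), rows (vi), (xx), (xxi) to \ref{general}.(iii), and rows (v), (xi), (xii), (xviii), (xix), (xxiii) to \ref{general1}, precisely the enumeration-then-specialization strategy you describe. Your flagged concern---that every non-symmetric semigroup with at most six gaps must fit one of the two templates---is indeed the only substantive verification, and the paper handles it tacitly through this case assignment rather than by a separate argument.
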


\begin{center}
$ $
\begin{tabular}{|c|c|c|c|c|}
\hline
{\rm case} & $C$ {\rm and} $C'$ & $d_{b}$ & $d_f$ \\ 
\hline 
& {\rm genus 3}  & & \\ \hline
{\rm (i)} &  $(1+a_1t\, :\, t^3 :\, t^5\, :\, t^6:\, t^7)$ & $3$ & $3$ \\
& $(1+a_1t\, :\, t^2\,:\,t^3)$ & & \\ \hline
& {\rm genus 4}  & & \\ \hline
{\rm (ii)} &  $(1+a_1t+a_2t^2\, :\, t^3 :\, t^7\, :\, t^8)$ & $3$ & $3$ \\
& $(1+a_1t+a_2t^2\, :\, t+a_1t^2\, :\, t^3\,:\,t^4)$ & & \\ \hline
{\rm (iii)} &  $(1+a_1t+a_2t^2\, :\, t^4 :\, t^5\, :\, t^7\, :\, t^8)$ & $3$ & $4$ \\
& $(1+a_1t+a_2t^2\, :\, t^3\, :\, t^4\,:\,t^5)$ & & \\ \hline
{\rm (iv)} &  $(1+a_1t\, :\, t^4 :\, t^6\, :\, t^7\, :\, t^9\, :\, t^{10})$ & $3$ & $4$ \\
& $(1+a_1t\, :\, t^2\, :\, t^3\,:\,t^4)$ & & \\ \hline
& {\rm genus 5}  & & \\ \hline
&  $(1+a_1t+a_2t^2\, :\, t^4+a_3t^5 :\, t^6\, :\, t^9 :\,t^{10}\,: t^{11})$ &  &  \\
 & $(1+a_1t+a_2t^2\, :\, t^2+(a_1-a_3)t^3\,:\,t^4+a_3t^5\, :\,t^5 :\,t^6)$ & & \\ 
{\rm (v)} & or & $3$ & $4$\\
&  $(1+a_1t+a_2t^3\, :\, t^4+a_1t^5 :\, t^6\, :\, t^9 :\,t^{10}\,: t^{11})$ &  &  \\
& $(1+a_1t+a_2t^3\, :\, t^2\,:\,t^4+a_1t^5\, :\,t^5 :\,t^6)$ &  &  \\ \hline 
{\rm (vi)}& $(1+a_1t+a_2t^2+a_3t^3\, :\, t^4+a_4t^6 :\, t^5\, :\,t^{10} :\,t^{11})$   & 3 & 4 \\ 
& $(1+a_1t+a_2t^2+a_3t^3\, :\, t+a_1t^2+(a_2-a_4)t^3\,:\,t^4+a_4t^6\, :\,t^5: \,t^6)$  & & \\ \hline
{\rm (vii)} & $(1+a_1t+a_2t^2\, :\, t^5 :\, t^6\, :\, t^8\, :\,t^9)$  & 3 & 5 \\ 
& $(1+a_1t+a_2t^2\, :\, t^3\,:\,t^4\, :\,t^5: \,t^6)$  & & \\ \hline
{\rm (viii)} & $(1+a_1t+a_2t^2+a_3t^3\, :\, t^5 :\, t^6\, :\, t^7\, :\,t^9  :\,t^{10})$ & 3 & 5 \\
& $(1+a_1t+a_2t^2+a_3t^3\, :\, t^4\,:\,t^5\, :\,t^6: \,t^7)$  & & \\ \hline
{\rm (ix)} & $(1+a_1t\, :\, t^5 :\, t^7\, :\, t^8 :\,t^9 :\, t^{10} :\,t^{11})$  & 3 & 5 \\ 
& $(1+a_1t\, :\, t^2\, :\,t^3 :\,t^4 :\,t^5)$  & & \\ \hline
{\rm (x)}  & $1+a_1t+a_2t^2\, :\, t^4 :\, t^7\, :\,t^9 :\,t^{10})$  & 3 & 4 \\ 
& $(1+a_1t+a_2t^2\, :\, t+a_1t^2\, :\,t^3 :\,t^4 :\,t^5)$  & & \\\hline
& $(1+2a_4t+a_2t^2+a_3t^3\, :\, t^3+a_4t^4 :\, t^8 :\,t^{10}\,: t^{11})$ & & \\
& $(1+2a_4t+a_2t^2+a_3t^3\, :\, t^2+a_4t^3+(a_2-a_4^2)t^4\,:\,t^3+a_4t^4\, :\,t^5 :\,t^6)$ &  &  \\
{\rm (xi)} & or & 4 & 3\\
& $(1+a_2t^2+a_3t^4\, :\, t^3 :\, t^8 :\,t^{10}\,: t^{11})$ & & \\
& $(1+a_2t^2+a_3t^4\, :\, t^2+a_2t^4\,:\,t^3\, :\,t^5 :\,t^6)$ &  &  \\ \hline
&  $(1+a_1t+a_2t^2+a_3t^3\, :\, t^3+a_4t^4+a_5t^5\, : \, t^7 :\,t^{10} :\,t^{11})$ & & \\
& $(1+a_1t+a_2t^2+a_3t^3 :\,t^3+a_4t^4+a_5t^5 :\,t^4+(a_1-a_4)t^5\, :\,t^6 :\,t^7)$ &  &  \\
{\rm (xii)} & or & 4 & 3\\
&  $(1+a_1t+(a_1^2-a_4^2)t^2+a_3t^5\, :\, t^3+a_4t^4+a_5^*t^5\, : \, t^7 :\,t^{10} :\,t^{11})$ & & \\
& $(1+a_1t+(a_1^2-a_4^2)t^2+a_3t^5 :\,t^3+a_4t^4+a_5^*t^5 :\,t^4+(a_1-a_4)t^5\, :\,t^6 :\,t^7)$ &  &  \\ 
& where $a_2=((a_1-a_4)^2+2a_5)$ and $a_5^*=a_4(a_1-a_4)$  & & \\ \hline
\end{tabular}
\end{center}

\begin{center}
$ $
\begin{tabular}{|c|c|c|c|c|}
\hline
& {\rm genus 6}  & & \\ \hline
{\rm (xiii)} & $(1+a_1t\, :\, t^6 \, :\, t^8\, :\,t^9  :\,t^{10}  :\,t^{11}  :\,t^{12} :\,t^{13})$  & 3 & 6 \\ 
&   $(1+a_1t\, :\, t^2\,:\,t^3\, :\,t^4  :\,t^5  :\,t^6)$ & & \\ \hline
{\rm (xiv)} & $(1+a_1t+a_2t^2\, :\, t^6\, :\, t^7\, : \, t^9 :\,t^{10} :\,t^{11})$  & 3 & 6 \\ 
&  $(1+a_1t+a_2t^2\, :\, t^3\,:\,t^4\, :\,t^5 :\,t^6\, :\,t^7)$ & & \\ \hline
{\rm (xv)}& $(1+a_1t+a_2t^2+a_3t^3\, :\, t^6 :\, t^7\, :\, t^8\, :\,t^{10}  :\,t^{11})$  & 3 & 6 \\ 
& $(1+a_1t+a_2t^2+a_3t^3\, :\, t^4\,:\,t^5\, :\,t^6: \,t^7: \,t^8)$ & & \\ \hline
{\rm (xvi)} & $(1+a_1t+a_2t^2+a_3t^3+a_4t^4\, :\, t^6 :\, t^7\, :\, t^8\, :\,t^9  :\,t^{11} :\,t^{12})$  & 3 & 6 \\ 
&  $(1+a_1t+a_2t^2+a_3t^3+a_4t^4\, :\, t^5\,:\,t^6\, :\,t^7 :\,t^8 :\,t^9)$ & & \\ \hline
{\rm (xvii)}&  $(1+a_1t+a_2t^2\, :\, t^5 :\, t^8\, :\,t^9 :\,t^{11} :\,t^{12})$  & 3 & 5 \\ 
&   $(1+a_1t+a_2t^2\, :\, t+a_1t^2\, :\,t^3 :\,t^4 :\,t^5 :\,t^6)$ & & \\ \hline
&  $(1+a_1t+a_2t^2\, :\, t^5+a_3t^6 :\, t^7\, :\, t^9 :\,t^{11}\,: t^{12}\,: t^{13})$ &  &  \\
 & $(1+a_1t+a_2t^2\, :\, t^2+(a_1-a_3)t^3\,:\,t^4\, :\,t^5+a_3t^6 :\,t^6\,: t^7)$ & & \\ 
{\rm (xviii)} & or & 3 & 5\\
&  $(1+a_1t+a_2t^3\, :\,  t^5+a_3t^6 :\, t^7\, :\, t^9 :\,t^{11}\,: t^{12}\,: t^{13})$ &  &  \\
& $(1+a_1t+a_2t^3\, :\, t^2+(a_1-a_3)t^3\,:\,t^4\, :\,t^5+a_3t^6 :\,t^6\,: t^7)$ &  &  \\ \hline 
&  $(1+a_1t+a_2t^2+a_3t^3\, :\, t^5+a_4t^6 :\, t^7\, : \, t^8 :\,t^{10} :\,t^{11})$ & & \\
& $(1+a_1t+a_2t^2+a_3t^3\, :\, t^3+(a_1-a_4)t^4\,:\,t^5+a_4t^6\, :\,t^6 :\,t^7 :\,t^8)$ &  &  \\
{\rm (xix)} & or & 4 & 5\\
&  $(1+a_1t+a_2t^2+a_3t^4\, :\,  t^5+a_1t^6 :\, t^7\, : \, t^8 :\,t^{10} :\,t^{11})$ &  &  \\
& $(1+a_1t+a_2t^2+a_3t^4\, :\, t^3\,:\,t^5+a_1t^6\, :\,t^6 :\,t^7 :\,t^8)$ &  &  \\ \hline 
{\rm (xx)} & $(1+a_1t+a_2t^2+a_3t^3\, :\, t^5+a_4t^7\, :\, t^6\, : \, t^9 :\,t^{12} :\,t^{13})$   & 3 & 5 \\ 
&  $(1+a_1t+a_2t^2+a_3t^3\, :\, t+a_1t^2+(a_2-a_4)t^3\,:\,t^4\, :\,t^5+a_4t^7 :\,t^6\, :\,t^7)$  & & \\ \hline
 & $(1+a_1t+a_2t^2+a_3t^3+a_4t^4\, :\, t^5+a_5t^8\, :\, t^6+a_6t^8\, : \, t^7\ :\, t^{10}\, :\, t^{11})$   & & \\ 
{\rm (xxi)} &  $(1+a_1t+\cdots+a_4t^4\, :\, t+a_1t^2+b_2t^3+b_3t^4\,:\,t^5+a_5t^8\, :\,t^6+a_6t^8 :\,t^7 :\,t^8)$  & 3 & 5 \\ 
& where $b_2=(a_2-a_6)$ and $b_3=(a_3-a_5-a_1a_6)$ & & \\ \hline
{\rm (xxii)}& $(1+a_1t+a_2t^2+a_3t^3\, :\, t^4 :\, t^9\, :\,t^{10} :\,t^{11})$  & 3 & 4 \\ 
&  $(1+a_1t+a_2t^2+a_3t^3\, :\, t+a_1t^2+a_2t^3\, :\,t^2+a_1t^3 :\,t^4 :\,t^5 :\,t^6)$ & & \\ \hline
&  $(1+2a_4t+a_2t^2+a_3t^3\, :\, t^4+a_4t^5+a_5t^6\, : \, t^7 :\,t^{10} :\,t^{12} :\,t^{13})$ & & \\
& $(1+2a_4t+a_2t^2+a_3t^3 :\,t^3+a_4t^4+a_5^*t^5 :\,t^4+a_4t^5+a_5t^6\, :\,t^6 :\,t^7 :\,t^8)$ &  &  \\
{\rm (xxiii)} & or & 4 & 4\\
&  $(1+2a_4t+a_2t^2+a_3t^5\, :\, t^4+a_4t^5+a_5^{**}t^6\, : \, t^7 :\,t^{10} :\,t^{12} :\,t^{13})$ & & \\
& $(1+a_1t+(a_1^2-a_4^2)t^2+a_3t^5 :\,t^3+a_4t^4+a_4^2t^5 :\,t^4+a_4t^5+a_5^{***}t^6\, :\,t^6 :\,t^7 :\,t^8)$ &  &  \\ 
& where $a_5^*=(a_2-a_4^2-a_5),~a_5^{**}=(a_2+a_1a_4-a_1^2)$ and $a_5^{***}=(a_2-2a_4^2)$   & & \\ \hline

\end{tabular} 
$ $
\end{center}

\begin{proof} All follows from Theorems \ref{general} and \ref{general1}. Cases (i), (iii), (iv), (vii)-(ix) and  (xiii)-(xvi) follow from \ref{general}.(i); cases (ii), (x), (xvii) and (xxii) from \ref{general}.(ii); cases (vi), (xx) and (xxi) from \ref{general}.(iii); and cases (v), (xi), (xii), (xviii), (xix) and (xxiii) from \ref{general1}.
\end{proof}

\begin{center} \scshape References \end{center}
\begin{biblist}
\parskip = 0pt plus 2pt









\bib{KK}{article}{
   author={Altman, Allen B.},
   author={Kleiman, Steven L.},
   title={Compactifying the Jacobian},
   journal={Bull. Amer. Math. Soc.},
   volume={82},
   date={1976},
   number={6},
   pages={947--949},
   issn={0002-9904},
   review={\MR{429908}},
   doi={10.1090/S0002-9904-1976-14229-2},
}

\bib{BDF}{article}{
   author={Barucci, V.},
   author={D'Anna, M.},
   author={Fr\"{o}berg, R.},
   title={Analytically unramified one-dimensional semilocal rings and their
   value semigroups},
   journal={J. Pure Appl. Algebra},
   volume={147},
   date={2000},
   number={3},
   pages={215--254},
   issn={0022-4049},
   review={\MR{1747441}},
   doi={10.1016/S0022-4049(98)00160-1},
}

 \bib{BC}{article}{
   author={Behnke, Kurt},
   author={Christophersen, Jan Arthur},
   title={Hypersurface sections and obstructions (rational surface
   singularities)},
   journal={Compositio Math.},
   volume={77},
   date={1991},
   number={3},
   pages={233--268},
   issn={0010-437X},
   review={\MR{1092769}},
}





\bib{CFM1}{article}{
   author={Cotterill, Ethan},
   author={Feital, Lia},
   author={Martins, Renato Vidal},
   title={Dimension counts for cuspidal rational curves via semigroups},
   journal={Proc. Amer. Math. Soc.},
   volume={148},
   date={2020},
   number={8},
   pages={3217--3231},
   issn={0002-9939},
   review={\MR{4108832}},
   doi={10.1090/proc/15062},
}


\bib{CLM}{article}{
   author={Cotterill, Ethan},
   author={Lima, Vin\'{\i}cius Lara},
   author={Martins, Renato Vidal},
   title={Severi dimensions for unicuspidal curves},
   journal={J. Algebra},
   volume={597},
   date={2022},
   pages={299--331},
   issn={0021-8693},
   review={\MR{4406400}},
   doi={10.1016/j.jalgebra.2021.11.028},
}

\bib{EH}{article}{
   author={Eisenbud, David},
   author={Harris, Joe},
   title={On varieties of minimal degree (a centennial account)},
   conference={
      title={Algebraic geometry, Bowdoin, 1985},
      address={Brunswick, Maine},
      date={1985},
   },
   book={
      series={Proc. Sympos. Pure Math.},
      volume={46},
      publisher={Amer. Math. Soc., Providence, RI},
   },
   date={1987},
   pages={3--13},
   review={\MR{927946}},
   doi={10.1090/pspum/046.1/927946},
}

\bib{EHKS}{article}{
   author={Eisenbud, David},
   author={Koh, Jee},
   author={Stillman, Michael},
   title={Determinantal equations for curves of high degree},
   journal={Amer. J. Math.},
   volume={110},
   date={1988},
   number={3},
   pages={513--539},
   issn={0002-9327},
   review={\MR{944326}},
   doi={10.2307/2374621},
}



\bib{FM}{article}{
   author={Feital, Lia},
   author={Vidal Martins, Renato},
   title={Gonality of non-Gorenstein curves of genus five},
   journal={Bull. Braz. Math. Soc. (N.S.)},
   volume={45},
   date={2014},
   number={4},
   pages={649--670},
   issn={1678-7544},
   review={\MR{3296185}},
   doi={10.1007/s00574-014-0067-5},
}

\bibitem{GM} E. Gagliardi. R. V. Martins, {\it Max Noether Theorem for Singular Curves}, arXiv:2202.09349

\bibitem{GNM} N. Galdino, D. Nicolau, R. V. Martins, {\it Family of Rational Curves with Two Blocks of Values}, https://sites.google.com/view/singularintegralcurves/in$\%$C3$\%$ADcio






\bib{KM}{article}{
   author={Kleiman, Steven Lawrence},
   author={Martins, Renato Vidal},
   title={The canonical model of a singular curve},
   journal={Geom. Dedicata},
   volume={139},
   date={2009},
   pages={139--166},
   issn={0046-5755},
   review={\MR{2481842}},
   doi={10.1007/s10711-008-9331-4},
}




\bib{Mt}{article}{
   author={Martins, Renato Vidal},
   title={On trigonal non-Gorenstein curves with zero Maroni invariant},
   journal={J. Algebra},
   volume={275},
   date={2004},
   number={2},
   pages={453--470},
   issn={0021-8693},
   review={\MR{2052619}},
   doi={10.1016/j.jalgebra.2003.10.033},
}

\bib{LMS}{article}{
   author={Martins, Renato Vidal},
   author={Lara, Danielle},
   author={Souza, Jairo Menezes},
   title={On gonality, scrolls, and canonical models of non-Gorenstein
   curves},
   journal={Geom. Dedicata},
   volume={203},
   date={2019},
   pages={111--133},
   issn={0046-5755},
   review={\MR{4027587}},
   doi={10.1007/s10711-019-00428-2},
}







\bib{RS}{article}{
   author={Rosa, Renata},
   author={St\"{o}hr, Karl-Otto},
   title={Trigonal Gorenstein curves},
   journal={J. Pure Appl. Algebra},
   volume={174},
   date={2002},
   number={2},
   pages={187--205},
   issn={0022-4049},
   review={\MR{1921820}},
   doi={10.1016/S0022-4049(02)00122-6},
}

\bib{R}{article}{
   author={Rosenlicht, Maxwell},
   title={Equivalence relations on algebraic curves},
   journal={Ann. of Math. (2)},
   volume={56},
   date={1952},
   pages={169--191},
   issn={0003-486X},
   review={\MR{48856}},
   doi={10.2307/1969773},
}



\bib{Sc}{article}{
   author={Schreyer, Frank-Olaf},
   title={Syzygies of canonical curves and special linear series},
   journal={Math. Ann.},
   volume={275},
   date={1986},
   number={1},
   pages={105--137},
   issn={0025-5831},
   review={\MR{849058}},
   doi={10.1007/BF01458587},
}


 \bib{St}{article}{
   author={Stevens, J.},
   title={The versal deformation of universal curve singularities},
   journal={Abh. Math. Sem. Univ. Hamburg},
   volume={63},
   date={1993},
   pages={197--213},
   issn={0025-5858},
   review={\MR{1227874}},
   doi={10.1007/BF02941342},
}

\bib{S}{article}{
   author={St\"{o}hr, Karl-Otto},
   title={On the poles of regular differentials of singular curves},
   journal={Bol. Soc. Brasil. Mat. (N.S.)},
   volume={24},
   date={1993},
   number={1},
   pages={105--136},
   issn={0100-3569},
   review={\MR{1224302}},
   doi={10.1007/BF01231698},
}







\end{biblist}

\end{document}